\let\mathcal\mathscr
\newcounter{toto}
\def\thetoto{\arabic{toto}}
\let\oldmarginpar\marginpar
\def\marginpar#1{\refstepcounter{toto}\textsuperscript{\textup{[\thetoto]}}\oldmarginpar{\footnotesize\textsuperscript{[\thetoto]}\,#1}}
\def\l@section{\@tocline{1}{0pt}{0pc}{}{}}
\def\l@subsection{\@tocline{2}{0pt}{1.5pc}{}{}}
\def\l@subsubsection{\@tocline{3}{0pt}{2pc}{}{}}
\newcommand{\C}{\mathbb{C}}\let\CC\C
\newcommand{\N}{\mathbb{N}}
\newcommand{\R}{\mathbb{R}}
\newcommand{\Z}{\mathbb{Z}}
\newcommand{\perv}{\mathrm{perv}}
\newcommand{\bD}{\boldsymbol{D}}
\newcommand{\Rhom}{R\shhom}
\newcommand{\shhom}{\mathcal{H}\!\mathit{om}}
\DeclareMathOperator{\rh}{\mathit{R}\shhom}
\DeclareMathOperator{\tho}{\mathit{T}\shhom}
\DeclareMathOperator{\RH}{RH}
\let\TH\THH
\newcommand{\rb}{\mathrm{b}}
\newcommand{\coh}{\mathrm{coh}}
\newcommand{\hol}{\mathrm{hol}}
\newcommand{\rhol}{\mathrm{rhol}}
\newcommand{\srhol}{\mathrm{srhol}}
\newcommand{\Mod}{\mathrm{Mod}}
\newcommand{\cc}{{\C\textup{-c}}}
\newcommand{\rc}{{\R\textup{-c}}}
\newcommand{\XS}{X\times S}
\newcommand{\DXS}{\shd_{\XS/S}}
\newcommand{\DXSa}{\shd_{\XS^*/S^*}}
\newcommand{\DYS}{\shd_{Y\times S/S}}
\DeclareMathOperator{\Char}{Char}
\DeclareMathOperator{\codim}{codim}
\DeclareMathOperator{\rD}{\mathsf{D}}
\DeclareMathOperator{\DR}{DR}
\DeclareMathOperator{\Db}{\mathfrak{Db}}
\DeclareMathOperator{\Hom}{Hom}
\DeclareMathOperator{\id}{Id}
\DeclareMathOperator{\Sol}{Sol}
\DeclareMathOperator{\pSol}{{}^\mathrm{p}Sol}
\DeclareMathOperator{\supp}{Supp}
\let\tilde\widetilde
\let\epsilon\varepsilon
\let\emptyset\varnothing
\let\setminus\smallsetminus
\let\leq\leqslant
\let\geq\geqslant
\def\loccit{loc.\kern3pt cit.{}\xspace}
\def\cf{cf.\kern.3em}
\def\eg{e.g.\kern.3em}
\def\resp{\text{resp.}\kern.3em}
\newcommand{\Di}{{}_{\scriptscriptstyle\mathrm{D}}i}
\newcommand{\pOS}{p^{-1}\sho_S}
\numberwithin{equation}{section}
\def\shc{\mathcal{C}}
\def\shd{\mathcal{D}}
\let\cF F
\let\cG G
\def\shh{\mathcal{H}}
\def\shj{\mathcal{J}}
\def\shh{\mathcal{H}}
\def\shl{\mathcal{L}}
\def\shm{\mathcal{M}}
\def\sho{\mathcal{O}}
 \def\shd{\mathcal{D}}
\let\cF\shf
\def\shh{\mathcal{H}}
\def\shj{\mathcal{J}}
\def\shl{\mathcal{L}}
\def\shm{\mathcal{M}}
\def\sho{\mathcal{O}}
\let\tilde\widetilde
\let\epsilon\varepsilon
\let\emptyset\varnothing
\let\setminus\smallsetminus
\let\leq\leqslant
\let\geq\geqslant
\def\loccit{loc.\kern3pt cit.{}X\times Space}
\def\cf{cf.\kern.3em}
\def\eg{e.g.\kern.3em}
\def\resp{\text{resp.}\kern.3em}
\newtheorem{theorem}{Theorem}[section]
\newtheorem{proposition}[theorem]{Proposition}
\newtheorem{lemma}[theorem]{Lemma}
\newtheorem{corollary}[theorem]{Corollary}
\theoremstyle{definition}
\newtheorem{definition}[theorem]{Definition}
\newtheorem{remark}[theorem]{Remark}
\newtheorem*{claim*}{Claim}
\newcommand{\RedefinitSymbole}[1]{%
\expandafter\let\csname old\string#1\endcsname=#1
\let#1=\relax
\newcommand{#1}{\csname old\string#1\endcsname\,}%
}
\def\to{\mathchoice{\longrightarrow}{\rightarrow}{\rightarrow}{\rightarrow}}
\def\hto{\mathrel{\lhook\joinrel\to}}
\def\To#1{\mathchoice{\xrightarrow{\textstyle\kern4pt#1\kern3pt}}{\stackrel{#1}{\longrightarrow}}{}{}}
\let\oldbigoplus\bigoplus
\renewcommand{\bigoplus}{\mathop{\textstyle\oldbigoplus}\displaylimits}
\let\oldbigwedge\bigwedge
\renewcommand{\bigwedge}{\mathop{\textstyle\oldbigwedge}\displaylimits}
\let\oldbigcap\bigcap
\renewcommand{\bigcap}{\mathop{\textstyle\oldbigcap}\displaylimits}
\let\oldbigcup\bigcup
\renewcommand{\bigcup}{\mathop{\textstyle\oldbigcup}\displaylimits}
\begin{document}

\author{Luisa Fiorot and Teresa Monteiro Fernandes}
\title[Relative Riemann-Hilbert correspondence]{Relative strongly regular holonomic $\shd$-modules and the Riemann-Hilbert correspondence}

\date{\today}

\thanks{The research of L.Fiorot was  supported by project BIRD163492 "Categorical homological methods in the study of algebraic structures" and project DOR1749402. The research of T.Monteiro Fernandes was supported by supported by Funda\c c\~ao para a Ci\^encia e a Tecnologia, UID/MAT/04561/2013.}

\address{Luisa Fiorot\\ Dipartimento di Matematica ``Tullio Levi-Civita'' Universit\`a degli Studi di Padova\\
Via Trieste, 63
35121 Padova Italy\\ \texttt{luisa.fiorot@unipd.it}}

\address{Teresa Monteiro Fernandes\\ Centro de Matem\'atica e Aplica\c{c}\~{o}es Fundamentais-CIO and Departamento de Matem\' atica da Faculdade de Ci\^encias da Universidade de Lisboa, Bloco C6, Piso 2, Campo Grande, 1749-016, Lisboa
Portugal\\ \texttt{mtfernandes@fc.ul.pt}}

\keywords{relative $\mathcal D$-module, De Rham functor, regular holonomic $\mathcal D$-module}

\subjclass[2010]{14F10, 32C38, 35A27, 58J15}

\begin{abstract}
We introduce the notion of strong regular holonomic $\DXS$-module and we prove that
the functor $\RH^S$ introduced in \cite{MFCS2} takes image in
$\rD^\rb_{\srhol}(\DXS)$ (complexes of $\DXS$-module whose cohomologies are
strongly regular).
We prove that for $\dim X=\dim S=1$ the functor solution functor $\pSol$ restricted to
  $\rD^\rb_{\srhol}(\DXS)$ is an equivalence of categories with quasi-inverse $\RH^S$.
\end{abstract}
\maketitle

\tableofcontents
\section*{Introduction.}

Let $X$ and $S$ be complex manifolds, with dimensions respectively $d_X$ and $d_S$. Let $p$ denote the projection $X\times S\to S$.

The main purpose of this paper is to clarify the notions of regularity for holonomic $\shd_{X\times S/S}$-modules, to introduce the notion of strong regularity and to explain the behaviour of the relative Riemann-Hilbert functor $\RH^S$ constructed in \cite{MFCS2} with respect to this new notion. 
More precisely, we start by giving a characterization of regular holonomic complexes when $d_S=1$  which was implicit in \cite{MFCS2} but not proved there as being equivalent to the previous one.
Supposing moreover $d_X=1$, we prove in Theorem~\ref{T1}
 that, generically in the sense of \cite{MFCS3}, that is, away of a discrete subset of $S$, a holonomic complex is regular if and only if, for any $x\in X$ the complex of holomorphic solutions restricted to $\{x\}\times S$ is isomorphic to the complex of solutions in the formal completion of  $\sho_{X\times S}$ along $\{x\}\times S$ and this last condition will be called strong regularity. This result is a relative version of the well known Kashiwara-Kawai's result in the absolute case (Theorem 6.4.1 of \cite{KK3}). 

For general $d_X$, replacing points (in the one dimensional case) by arbitrary hypersurfaces, leads us to the category 
$\rD^\rb_{\srhol}(\DXS)$ 
whose objects are complexes with strongly regular cohomologies (Definition~\ref{D:sr}). 
Our first main result is Theorem \ref{Tstrong} in which we prove that the functor
$\RH^S$ takes image in $\rD^\rb_{\srhol}(\DXS)$ (where $d_S=1$
while there is no assumption on $d_X$). 

We conjecture that the functor $\RH^S:\rD^\rb_{\cc}(p_X^{-1}\sho_S)\to \rD^\rb_{\srhol}(\DXS)$
is an equivalence of category with quasi-inverse $\pSol$.
As a first step in this direction we prove in
Proposition~\ref{Tequiv1} that 
the restriction of the solution functor  
$\RH^S:\rD^\rb_{\cc}(p_X^{-1}\sho_S)_t\to \rD^\rb_{\srhol}(\DXS)_t$
to torsion complexes
(those having support of the form in $X\times S_0$ where $S_0$ is a discrete subset of $S$)
 is an equivalence of categories. Also, in Proposition \ref{CNEW2}, the same holds true in the abelian category of modules of $D$-type in a general sense along a fixed normal crossing divisor.

As another positive result for our conjecture we prove in Theorem~\ref{Tequiv} that, if  $d_X=1=d_S$,
the functor $\RH^S:\rD^\rb_{\cc}(p_X^{-1}\sho_S)\to \rD^\rb_{\srhol}(\DXS)$ is indeed an equivalence 
improving the result obtained in \cite{MFCS3}. 

However our methods do not apply for $d_X>1$ because, among other features, although the functor $\RH^S$ behaves well under restriction to submanifolds, this is not true for arbitrary holomorphic morphisms.

\section{Regular holonomic $\DXS$-modules}

For a holomorphic function $f$ on $S$ we define $Li^*_f$ as being the derived functor 
$p^{-1}(\sho_S/\sho_S f)\overset {L}{\otimes}_{p^{-1}\sho_S}\cdot$ on the derived category of $p^{-1}\sho_S$-modules.
If $s$ is any point of $S$, we denote by $Li^*_s$, as in \cite{MFCS1} and \cite{MFCS2}, 
the derived functor $p^{-1}(\sho_S/m)\overset {L}{\otimes}_{p^{-1}\sho_S}\cdot$, 
where $m$ denotes the maximal ideal of holomorphic functions vanishing at $s$.

Hereafter, when we mention "torsion" we refer to the action of $p^{-1}\sho_S$. Recall that, when $d_S=1$, a torsion free (also called strict) module will be locally free over $p^{-1}\sho_S$.

Let us recall that given a triangulated category $\shc$, by Rickard's criterion (\cite{Nee}), 
a full triangulated category $\shc'$ of $\shc$ is a thick subcategory if and only if it is closed under direct factors in $\shc$
(which means that any direct summand of an object in $\shc'$ is in $\shc'$).
In our case the category $\shc=\rD^\rb_{\hol}(\shd_X)$ and we aim to study the thick subcategory of
regular holonomic complexes.

When the triangulated category $\shc$ is endowed with a bounded $t$-structure ${\rD}=({\rD}^{\leq 0},{\rD}^{\geq 0})$ one can require
that the subcategory $\shc'$ is compatible with the truncation functors of the $t$-structure ${\rD}$ i.e.
for any $\shm\in \shc'$ we have $\tau^{\leq 0}\shm, \tau^{\geq 1}\shm\in \shc'$. Due to the fact that any object
in $\shc$ has only a finite number of non zero cohomologies, the compatibility of $\shc'$  with the truncation functors of ${\rD}$
is equivalent to require that $\shh^i(\shm)\in \shc'$ for any $i\in \Bbb Z$. This condition is essential in order to proceed by
induction on the cohomological length of the complex.

In \cite{MFCS2} the following definitions were introduced:

\noindent $(\mathrm{Reg 1})$ 

\noindent i) A holonomic $\shd_{X\times S/S}$-module $\shm$ is regular if, for each $s\in S$, $Li^*_s\shm$ is a complex in $\rD^\rb_{\rhol}(\shd_X)$.

\noindent ii) A complex $\shm\in\rD^\rb_{\hol}(\shd_{X\times S/S})$ is regular holonomic if its cohomology groups $\shh^j(\shm)$ are regular holonomic.

An alternative and natural definition of regularity would be the following:

\noindent$(\mathrm{Reg 2})$ 

\noindent A complex $\shm\in\rD^\rb_{\hol}(\shd_{X\times S/S})$ is regular holonomic if, for each $s\in S$, $Li^*_s\shm\in\rD^\rb_{\rhol}(\shd_X)$.


 We will prove in Proposition~\ref{Prop:1} that
the previous definitions 
are equivalent for $d_S=1$ and that for any $S$ $(\mathrm{Reg 1})$ implies $(\mathrm{Reg 2})$.
We remark that
both definitions give thick triangulated subcategories of $\rD^\rb_{\hol}(\shd_{X\times S/S})$ and it is clear that
whenever  $\shm$ is concentrated in a single degree the conditions $(\mathrm{Reg 1})$ and $(\mathrm{Reg 2})$ are equivalent.
Condition $(\mathrm{Reg 1})$ is by definition compatible with the truncation functors and condition
$(\mathrm{Reg 2})$ is compatible with the truncation functors if and only if it is equivalent to $(\mathrm{Reg 1})$.
On the other side condition $(\mathrm{Reg 2})$ is compatible with base change on $S$ which means that
given $S'\stackrel{g}\to S$ a morphism of complex manifolds and $\shm\in \rD^\rb_{\hol}(\DXS)$ 
which satisfies $(\mathrm{Reg 2})$ we get that $L(\id_X\times g)^\ast (\shm)\in \rD^\rb_{\hol}(\shd_{X\times S'/S'})$
satisfies $(\mathrm{Reg 2})$ too.

\begin{remark}
Condition $(\mathrm{Reg 1})$ implies $(\mathrm{Reg 2})$.
To see this, we argue by induction on the length of $\shm$.
Without loss of generality, we may assume that $\shm\in \rD^{\geq 0}_{\hol}(\shd_{X\times S/S})$
and we consider the following distinguished triangle
$(A)\,\, \shh^0\shm\to\shm\to \tau^{\geq1}\shm\stackrel{+}\to $.
Let us assume that $\shm$ satisfies $(\mathrm{Reg 1})$, hence
by definition both $\shh^0\shm$ and $\tau^{\geq 1}\shm$ satisfy $(\mathrm{Reg 1})$.
As remarked, $\shh^0\shm$ satisfies $(\mathrm{Reg 2})$ too and by induction on the length of $\shm$, 
$\tau^{\geq 1}\shm$ satisfies $(\mathrm{Reg 2})$ which permits to conclude that $\shm$ also satisfies $(\mathrm{Reg 2})$.
\end{remark}

\begin{proposition}\label{Prop:1}
For $d_S=1$ condition $(\mathrm{Reg 1})$ is equivalent to $(\mathrm{Reg 2})$.
\end{proposition}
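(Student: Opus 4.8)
The implication $(\mathrm{Reg 1})\Rightarrow(\mathrm{Reg 2})$ holds for arbitrary $S$ and was settled in the Remark above, so the plan is to prove the converse under the standing hypothesis $d_S=1$. The idea is that for $d_S=1$ the functor $Li_s^*$ has very small amplitude, which makes the cohomologies of $Li_s^*\shm$ control directly those of $Li_s^*\shh^k\shm$, with no induction on the length of $\shm$. Concretely: fix $s\in S$ and, using $d_S=1$, choose a local coordinate $t$ on $S$ near $s$ so that the maximal ideal $m$ of $\sho_S$ at $s$ is generated by $t-s$. Then $\sho_S/m$ has the length-one free resolution $[\sho_S\xrightarrow{t-s}\sho_S]$, hence $Li_s^*$ has cohomological amplitude contained in $[-1,0]$ and, for every $\shm\in\rD^\rb_{\hol}(\DXS)$, the complex $Li_s^*\shm$ is computed by the two-term complex $[\shm\xrightarrow{t-s}\shm]$ (target in degree $0$) and sits in a distinguished triangle $\shm\xrightarrow{t-s}\shm\to Li_s^*\shm\stackrel{+}\to$. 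Writing $i_s^*\shn\defin\shn/(t-s)\shn=\shh^0(Li_s^*\shn)$ and $L_1i_s^*\shn\defin\{x\in\shn\mid(t-s)x=0\}=\shh^{-1}(Li_s^*\shn)$ — both of which are $\shd_X$-modules because $t-s$ lies in the centre $p^{-1}\sho_S$ of $\DXS$ — the long exact cohomology sequence attached to that triangle breaks, for every $k\in\Z$, into a short exact sequence of $\shd_X$-modules
\[
0\to i_s^*\shh^k\shm\to\shh^k(Li_s^*\shm)\to L_1i_s^*\shh^{k+1}\shm\to 0.
\]

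Now suppose $\shm$ satisfies $(\mathrm{Reg 2})$, so that each $\shh^k(Li_s^*\shm)$ is a regular holonomic $\shd_X$-module. Since the regular holonomic $\shd_X$-modules form a thick abelian subcategory of $\Mod_{\hol}(\shd_X)$, in particular one stable under passing to submodules and to quotients, the displayed sequence shows that $i_s^*\shh^k\shm$ (a submodule of a regular holonomic module) and $L_1i_s^*\shh^{k+1}\shm$ (a quotient of one) are again regular holonomic, for all $k$ and all $s\in S$. Consequently, for every $j$ and every $s$ the complex $Li_s^*\shh^j\shm$ — whose only non-zero cohomologies are $i_s^*\shh^j\shm$ in degree $0$ and $L_1i_s^*\shh^j\shm$ in degree $-1$ — lies in $\rD^\rb_{\rhol}(\shd_X)$. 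That is exactly condition $(\mathrm{Reg 1})$ i) for each $\shh^j\shm$, so $\shm$ satisfies $(\mathrm{Reg 1})$.

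The only place where $d_S=1$ is genuinely used is the amplitude bound on $Li_s^*$: it guarantees that $i_s^*\shh^k\shm$ embeds into the single module $\shh^k(Li_s^*\shm)$, rather than merely into a subquotient coming from a spectral sequence with several rows, and this is precisely what makes the sub/quotient argument close. I do not expect a real obstacle here; the only points needing a little care are the identifications of $i_s^*$ and $L_1i_s^*$ with the cokernel and kernel of multiplication by $t-s$ and the verification that the connecting maps in the displayed sequence are $\shd_X$-linear, both of which are immediate from the centrality of $p^{-1}\sho_S$ in $\DXS$.
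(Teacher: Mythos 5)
Your proof is correct, and the essential inputs are the same as in the paper's argument: the amplitude bound $[-1,0]$ for $Li^*_s$ when $d_S=1$, and the stability of regular holonomic $\shd_X$-modules under subquotients. The organization, however, is genuinely different. The paper proves $(\mathrm{Reg\,2})\Rightarrow(\mathrm{Reg\,1})$ by induction on the cohomological length, using the truncation triangle $\shh^0\shm\to\shm\to\tau^{\geq 1}\shm\stackrel{+}\to$ together with the vanishing $\shh^{-1}Li^*_{s}\tau^{\geq 1}\shm=0$ to realize the cohomologies of $Li^*_s\shh^0\shm$ as submodules of those of $Li^*_s\shm$, and then passes to $\tau^{\geq 1}\shm$. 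You instead apply cohomology to the Koszul triangle $\shm\To{t-s}\shm\to Li^*_s\shm\stackrel{+}\to$ and obtain, for all $k$ simultaneously, the short exact sequence
$$0\to \shh^0(Li^*_s\shh^k\shm)\to\shh^k(Li^*_s\shm)\to \shh^{-1}(Li^*_s\shh^{k+1}\shm)\to 0,$$
which removes the induction entirely and treats all cohomological degrees in one stroke. Your version has the additional merit of making explicit that the only place $d_S=1$ enters is the length of the Koszul resolution of $\sho_S/m$ --- which is also why neither argument extends as written to higher-dimensional $S$, where $\shh^0(Li^*_s\shh^k\shm)$ is only a subquotient arising from a spectral sequence, consistent with the paper's subsequent remark on the obstruction for general $d_S$.
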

\begin{proof}
We shall argue by induction on the cohomological length of $\shm$. 
As above we may assume that $\shm\in \rD^{\geq 0}_{\hol}(\shd_{X\times S/S})$ and we consider the  distinguished triangle $(A)$.
Assume that $\shm$ satisfies $(\mathrm{Reg 2}$) and let $s_0\in S$. Taking a local coordinate on $S$ vanishing on $s_0$, we deduce an exact sequence 
$$0\to\shh^{-1}Li^*_{s_0}\shh^{0}\shm\to \shh^{-1} Li^*_{s_0}\shm\to \shh^{-1} Li^*_{s_0}\tau^{\geq 1}\shm (=0)$$ 
$$\to \shh^0 L i^*_{s_0}\shh^0\shm\to 
\shh^0 L i^*_{s_0}\shm\to \shh^0 L i^*_{s_0}\tau^{\geq 1}\shm\to 0$$ 
so that, for $k\geq 1$, 
$\shh^{k}Li^*_{s_0}(\shm)\simeq \shh^{k}Li^*_{s_0}(\tau^{\geq 1}\shm)$.
The category of regular $\shd_{X}$-modules is closed under subquotients, so we conclude that $\shh^{0}\shm$ satisfies $(\mathrm{Reg 2})$, hence $\tau^{\geq 1}\shm$ also satisfies $(\mathrm{Reg 2})$. Since $\shh^0\shm$ satisfies $(\mathrm{Reg 1})$, induction on the length  entails that $\shm$ also satisfies $(\mathrm{Reg 1})$.
\end{proof}

\begin{remark}
Despite the absolute case it is not clear if the category of 
relative regular holonomic $\DXS$-module is closed under subquotients 
in the category of holonomic $\DXS$-module 
even in the case of $d_S=1$. 
Let $0\to \shm_1\to \shm\to\shm_2\to 0$ be a short exact sequence of 
holonomic $\DXS$-module
such that the middle term $\shm$ is  regular holonomic.
Hence for any $s_0\in S$ ($d_S=1$) we obtain the long exact sequence:
$$
\
0\to \shh^{-1}Li^*_{s_0}\shm_1\to  \shh^{-1}Li^*_{s_0}\shm\to  \shh^{-1}Li^*_{s_0}\shm_2\to $$
$$\;\;\to
\shh^{0}Li^*_{s_0}\shm_1\to  \shh^{0}Li^*_{s_0}\shm\to  
\shh^{0}Li^*_{s_0}\shm_2\to  0
$$
and using the hypothesis $Li^*_s\shm\in\rD^\rb_{\rhol}(\shd_X)$
in general we can only conclude that 
$\shh^{-1}Li^*_{s_0}\shm_1$ and $\shh^{0}Li^*_{s_0}\shm_2$ belongs to
$\rD^\rb_{\rhol}(\shd_X)$.

We notice that if $\shm$ is a torsion regular holonomic $\DXS$-module
 both $\shm_1, \shm_2$ are  torsion regular holonomic too.

In the case of $\shm_1=t(\shm)$ and $\shm_2=f(\shm)$ respectively
the torsion sub-object  and the strict quotient of $\shm$
we have  $ \shh^{-1}Li^*_{s_0}\shm_2=0$
since $\shm_2$ is strict and hence any term of the previous long exact
sequence is regular (since by hypothesis $\shh^{-1}Li^*_{s_0}\shm$
and $\shh^{0}Li^*_{s_0}\shm$ are regular).
This permits to conclude that 
$t(\shm)$ and $f(\shm)$ are regular holonomic too.

If for any $d_S$ the category of regular holonomic $\DXS$-modules
would be closed by  subquotients one can prove by induction that
condition  $(\mathrm{Reg 1})$, is equivalent to $(\mathrm{Reg 2})$.
\end{remark}

\section{Strong regularity}

\subsection{Complementary results on $\DXS$-modules}

For any submanifold $Y\subset X$, one defines the formal completion of $\sho_{X\times S}$ along $Y\times S$, $\sho_{X\times S\widehat{|}Y\times S}$, by $$\sho_{X\times S\widehat{|}Y\times S}:=\varprojlim_{k\in\Z} \sho_{X\times S}/\shj^k$$ where $\shj$ denotes the ideal of holomorphic functions vanishing on $Y\times S$.

\begin{lemma}\label{L12}
For any subamanifold $Y\subset X$, there are functorial isomorphism on $\rD^\rb(\DXS)$
$$
(i)\Rhom_{\DXS}(R\Gamma_{[Y\times S]}(\shm), \sho_{X\times S})
\stackrel{\simeq}\to \Rhom_{\DXS}(\shm, \sho_{X\times S\widehat{|}Y\times S})$$
$$(ii) \Rhom_{\DXS}(\shm, \sho_{X\times S\widehat{|}Y\times S})|_{Y\times S}\simeq\Rhom_{\DYS}({}_Di^*_{Y\times S}\shm, \sho_{Y\times S}).$$
\end{lemma}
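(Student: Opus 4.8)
The plan is to establish the two isomorphisms separately, both by reducing to the absolute ($S$ a point) situation after an appropriate relative flatness argument, or better, by purely categorical manipulations internal to the relative setting.

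For part (i), I would first recall the standard duality between local cohomology along $Y\times S$ and formal completion. The key formal identity is
$$
\sho_{X\times S\widehat{|}Y\times S}\simeq R\ho_{p^{-1}\sho_S}\bigl(R\Gamma_{[Y\times S]}(p^{-1}\sho_S\,?),\,\text{—}\bigr)
$$
is not quite right; instead, the cleanest route is via the algebraic local cohomology functor. One has a canonical morphism $\shm\to R\Gamma_{[Y\times S]}\shm$'s ``dual'': more precisely, for a coherent $\DXS$-module $\shm$, there is a functorial isomorphism
$$
R\Gamma_{[Y\times S]}(\sho_{X\times S})\simeq \varinjlim_k \Rhom_{\sho_{X\times S}}(\sho_{X\times S}/\shj^k,\sho_{X\times S}),
$$
and dualizing the $\ind$-pro system $\{\sho_{X\times S}/\shj^k\}$ against $\sho_{X\times S}$ exchanges $\varprojlim$ and $\varinjlim$. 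Concretely, I would use the adjunction
$$
\Rhom_{\DXS}\bigl(R\Gamma_{[Y\times S]}\shm,\sho_{X\times S}\bigr)\simeq \Rhom_{\DXS}\bigl(\shm,\,R\ho_{\sho_{X\times S}}(R\Gamma_{[Y\times S]}\sho_{X\times S}\text{ as bimodule},\,\cdot)\bigr)
$$
— but the honest argument is: $R\Gamma_{[Y\times S]}\shm\simeq \shm\overset{L}{\otimes}_{\sho_{X\times S}} R\Gamma_{[Y\times S]}\sho_{X\times S}$ (the local cohomology is computed by tensoring with the Koszul-type complex on local generators of $\shj$, which is a complex of $\DXS$-bimodules), whence by tensor-hom adjunction
$$
\Rhom_{\DXS}(R\Gamma_{[Y\times S]}\shm,\sho_{X\times S})\simeq \Rhom_{\DXS}\bigl(\shm,\Rhom_{\sho_{X\times S}}(R\Gamma_{[Y\times S]}\sho_{X\times S},\sho_{X\times S})\bigr),
$$
and then I would identify $\Rhom_{\sho_{X\times S}}(R\Gamma_{[Y\times S]}\sho_{X\times S},\sho_{X\times S})\simeq \sho_{X\times S\widehat{|}Y\times S}$ by the standard local-cohomology/completion duality (this is a purely $\sho$-module computation, local on $X$, and the presence of the extra variables $S$ does not interfere since $Y\times S$ is defined by equations pulled back from $Y\subset X$). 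The $\DXS$-module structure on the completion is the natural one coming from the continuity of differential operators for the $\shj$-adic topology.

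For part (ii), I would combine (i) with the relative Kashiwara equivalence / base-change behaviour of ${}_Di^*_{Y\times S}$. Namely, restrict the isomorphism in (i) to $Y\times S$: by (i), the left side of (ii) is $\Rhom_{\DXS}(R\Gamma_{[Y\times S]}\shm,\sho_{X\times S})|_{Y\times S}$, and I would invoke the projection/restriction formula expressing $(\cdot)|_{Y\times S}$ of an $\Rhom$ out of a module supported on $Y\times S$ in terms of $\Rhom$ over $\DYS$. Explicitly, writing $i\colon Y\times S\hookrightarrow X\times S$, one has $R\Gamma_{[Y\times S]}\shm\simeq {}_Di_*\,{}_Di^*_{Y\times S}\shm$ up to a shift by the codimension when $\shm$ is appropriately non-characteristic/holonomic — but in general one uses that $R\Gamma_{[Y\times S]}\shm$ is the image under ${}_Di_*$ of its restriction, together with the adjunction $\Rhom_{\DXS}({}_Di_*\shn,\sho_{X\times S})\simeq i_*\Rhom_{\DYS}(\shn,i^{!}_{\mathrm{DR}}\sho_{X\times S})$ and the identification $i^{!}_{\mathrm{DR}}\sho_{X\times S}\simeq\sho_{Y\times S}$ (again a local computation, insensitive to $S$). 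Restricting to $Y\times S$ removes the $i_*$ and yields the right-hand side.

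The main obstacle I anticipate is \emph{not} the formal manipulation but making sure all the standard absolute-case identities — local cohomology as a tensor with a bimodule complex, the completion/local-cohomology duality, and the $i_*/i^!$ adjunction for $\shd$-modules — are available verbatim in the relative setting over $p^{-1}\sho_S$, i.e.\ that flatness of $\sho_{X\times S}$ over $p^{-1}\sho_S$ and the fact that $\shj$ is generated by functions pulled back from $X$ let one reduce each step to the classical case with $S$ merely a spectator. I would therefore organize the proof so that (i) is proved by the tensor-hom adjunction above (citing the relative analogues of the classical $\sho$-module facts), and (ii) follows formally from (i) plus the relative direct-image adjunction, keeping the genuinely new content — relative flatness and the compatibility of the $\DXS$-action with completion — isolated in one preliminary paragraph.
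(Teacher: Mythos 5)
Your proposal is correct and follows essentially the same route as the paper: part (i) via $R\Gamma_{[Y\times S]}\shm\simeq \shm\otimes^{L}_{\sho_{X\times S}}R\Gamma_{[Y\times S]}\sho_{X\times S}$, tensor-hom adjunction, and the completion/local-cohomology duality $\Rhom_{\sho_{X\times S}}(R\Gamma_{[Y\times S]}\sho_{X\times S},\sho_{X\times S})\simeq\sho_{X\times S\widehat{|}Y\times S}$ (the paper cites Mebkhout for this), and part (ii) from (i) via $R\Gamma_{[Y\times S]}\shm\simeq {}_Di_{*}\,{}_Di^{*}_{Y\times S}\shm$ and the relative direct-image adjunction (the paper cites the relative version of Kashiwara's Th.~4.33).
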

\begin{proof}
Since $R\Gamma_{[Y\times S]}(\shm)\simeq {}_D {i_{Y\times S}}_* {{}_D} i^*_{Y\times S}\shm$, $(ii)$ follows from$(i)$ by adjunction thanks to the relative version of \cite[Th. 4. 33]{Ka2}.

Let us now prove $(i)$:

We have $R\Gamma_{[Y\times S]}(\shm)\simeq  R\Gamma_{[Y\times S]}(\sho_{X\times S})\otimes^{L}_{\sho_{X\times S}} \shm$
hence
\begin{eqnarray*}
\Rhom_{\DXS}(R\Gamma_{[Y\times S]}(\shm), \sho_{X\times S}) \simeq  \hfill
\\
\Rhom_{\shd_{X\times S}}(\shd_{X\times S}\otimes_{\DXS}R\Gamma_{[Y\times S]}(\shm), \sho_{X\times S}) \simeq  \hfill
\\
\Rhom_{\shd_{X\times S}}(\shd_{X\times S}\otimes_{\DXS}\shm \otimes^{L}_{\sho_{X\times S}}R\Gamma_{[Y\times S]}(\sho_{X\times S}), \sho_{X\times S})\simeq  \hfill \\
\Rhom_{\shd_{X\times S}}(\shd_{X\times S}\otimes _{\DXS} \shm, \Rhom_{\sho_{X\times S}}
(R\Gamma_{[Y\times S]}(\sho_{X\times S}),\sho_{X\times S})\simeq  \hfill \\
 \Rhom_{\DXS}(\shm, \sho_{X\times S\widehat{|}Y\times S})\\
\end{eqnarray*}
where the last isomorphism follows from \cite[Prop. 2.2.2]{M}.
\end{proof}
%

\begin{proposition}\label{P:for}
For any submanifold $Y\subset X$ we have
 $$\,Li^*_s \sho_{X\times S\widehat{|}Y\times S}\simeq \sho_{X\widehat{|}Y}$$
\end{proposition}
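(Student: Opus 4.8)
The plan is to reduce the statement to an explicit local computation, since it is local on $X\times S$ and concerns a single sheaf. First I would fix a point of $Y\times\{s\}$ and choose adapted product coordinates: a polydisc $U=U'\times U''$ in $X$ with coordinates $(x',x'')$, where $x''=(x_1,\dots,x_\ell)$ and $Y=\{x''=0\}$, together with a polydisc $V$ in $S$ with coordinates $(s_1,\dots,s_{d_S})$ centred at $s$. Then $\shj=(x'')\cdot\sho_{X\times S}$, the defining ideal $\shi$ of $Y$ in $X$ equals $(x'')\cdot\sho_X$ (so that $\sho_{X\widehat{|}Y}=\varprojlim_k\sho_X/\shi^k$), and the maximal ideal $m$ at $s$ is generated by the regular sequence $s_1,\dots,s_{d_S}$.

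The first — and main — task is to make the local model of $\sho_{X\times S\widehat{|}Y\times S}$ precise. Because inverse limits of sheaves are computed sectionwise and $H^1(W,\shj^k)=0$ for every Stein polydisc $W=U\times V$ by Cartan's Theorem B ($\shj^k$ being coherent), I would deduce that $\Gamma(W,\sho_{X\times S\widehat{|}Y\times S})$ is the $(x'')$-adic completion of the ring $R:=\Gamma(W,\sho_{X\times S})=\sho(U\times V)$. Expanding holomorphic functions on the product polydisc in Taylor series along $\{x''=0\}$ gives $R/(x'')^kR\cong\bigoplus_{|\alpha|<k}(x'')^\alpha R_0$ with $R_0:=\sho(U'\times V)$, whence $\Gamma(W,\sho_{X\times S\widehat{|}Y\times S})\cong R_0[[x'']]$ as a ring; similarly $\Gamma(U,\sho_{X\widehat{|}Y})\cong\sho(U')[[x'']]$. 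I would also record that, by the same Theorem B argument, $R_0/(s_1,\dots,s_j)R_0=\sho\bigl(U'\times(V\cap\{s_1=\dots=s_j=0\})\bigr)$ for each $j$; in particular $R_0$ and all these quotients are integral domains, being rings of holomorphic functions on connected polydiscs.

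The second task is a Koszul reduction. Since $m$ is generated by the regular sequence $s_1,\dots,s_{d_S}$, for any $p^{-1}\sho_S$-module $\shf$ the complex $Li^*_s\shf$ is the Koszul complex of $s_1,\dots,s_{d_S}$ acting on $\shf$; so it suffices to check (a) that $s_1,\dots,s_{d_S}$ is a regular sequence on $\sho_{X\times S\widehat{|}Y\times S}$ — which forces $Li^*_s\sho_{X\times S\widehat{|}Y\times S}$ to sit in degree $0$ — and (b) that the quotient is $\sho_{X\widehat{|}Y}$. Both are read off from the model $R_0[[x'']]$: writing an element uniquely as $\sum_\alpha (x'')^\alpha g_\alpha$ one sees that each $s_j$ acts coefficientwise, so the regularity of $s_1,\dots,s_{d_S}$ on $R_0[[x'']]$ follows from their regularity on the chain of domains $R_0/(s_1,\dots,s_j)R_0$, and $R_0[[x'']]/(s_1,\dots,s_{d_S})=\sho(U')[[x'']]=\Gamma(U,\sho_{X\widehat{|}Y})$. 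Letting $W$ range over a basis of adapted Stein polydiscs then gives the functorial isomorphism $Li^*_s\sho_{X\times S\widehat{|}Y\times S}\simeq\sho_{X\times S\widehat{|}Y\times S}/m\cdot\sho_{X\times S\widehat{|}Y\times S}\simeq\sho_{X\widehat{|}Y}$.

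The hard part is the first task: one must ensure that the inverse limit defining the formal completion behaves well — that it commutes with restriction to Stein polydiscs (true, limits of sheaves being sectionwise), that $\Gamma(W,\sho_{X\times S}/\shj^k)=\Gamma(W,\sho_{X\times S})/\Gamma(W,\shj^k)$ (true since $\shj^k$ is coherent and $W$ is Stein), and, above all, that the resulting $(x'')$-adic completion really is the formal power series ring $R_0[[x'']]$. This last identification is the interplay between convergent and formal power series and is where the bulk of the (otherwise elementary) work lies; once it is available, steps (a) and (b) are immediate.
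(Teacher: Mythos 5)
Your proof is correct, but it follows a genuinely different route from the paper's. The paper's proof is a two-line formal argument: it invokes the properties of $Li^*_s$ established in \cite[Prop.~3.1]{MFCS1} (in particular that $Li^*_s$ applied to each quotient $\sho_{X\times S}/\shj^k$ is concentrated in degree zero and equal to $\sho_X/\shi^k$, where $\shi$ is the ideal of $Y$ in $X$) and then commutes $Li^*_s$ with the projective limit $\varprojlim_k \sho_{X\times S}/\shj^k$ via the Mittag--Leffler condition, which holds because the transition maps $\sho_{X\times S}/\shj^{k+1}\to\sho_{X\times S}/\shj^k$ are surjective. You instead bypass the limit-interchange entirely: you compute the completed sheaf explicitly on a basis of adapted Stein polydiscs, identify its sections with the formal power series ring $R_0[[x'']]$ over $R_0=\sho(U'\times V)$, and then resolve $\sho_S/m$ by the Koszul complex of the coordinate regular sequence, checking regularity coefficientwise. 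What your approach buys is self-containedness (no appeal to \cite{MFCS1}) and an argument that visibly works for arbitrary $d_S$, at the cost of the bulk of the work being the elementary but not entirely free identification of the $(x'')$-adic completion of $\sho(U'\times U''\times V)$ with $R_0[[x'']]$ (Taylor expansion along $Y$ with remainder in $\shj^k$), which you correctly flag; the paper's approach buys brevity by delegating exactly the degree-zero concentration and base-change facts to the cited proposition. One small point worth making explicit in your write-up: passing from exactness of the sections of the Koszul complex over a basis of Stein polydiscs to the vanishing of the cohomology \emph{sheaves} uses that these cohomology sheaves are the sheafifications of the presheaves $W\mapsto H^i(\Gamma(W,\cdot))$ of the sections complex; this is routine but is the step that turns your pointwise/local computation into the asserted isomorphism of sheaves.
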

 \begin{proof}
 Thanks to the properties of $Li^*_s$ (cf. \cite[Prop. 3.1]{MFCS1}) the result follows from Mittag-Leffler's condition since the morphisms $\sho_{X\times S}/\shj^{k+1}\to \sho_{X\times S}/\shj^k$ are surjective.
 \end{proof}
%

\begin{definition}\label{D:sr}
Let $\shm$ be a
holonomic $\DXS$-module; $\shm$ is called \textit{strongly regular along  an  hypersurface $Y\subset X$} if
%
the natural morphism 
$$(\ast\ast)\,\Rhom_{\DXS}(\shm, \sho_{X\times S})_{|Y\times S}\to \Rhom_{\DXS}(\shm, \sho_{X\times S\widehat{|}Y\times S})$$ is an isomorphism. 
%
%

$\shm$ 
is called \textit{strongly regular} if  it is strongly regular along any hypersurface.
Strongly regular holonomic $\DXS$-modules form a 
 full thick category $\Mod_{\srhol}(\DXS)$ of $\Mod_{\rhol}(\DXS)$ and we will denote by $\rD^\rb_{\srhol}(\DXS)$ the full subcategory of $\rD^\rb_{\hol}(\DXS)$
whose objects are complexes with strongly regular cohomologies.
%
\end{definition}

Thanks to Proposition~\ref{P:for} and Theorem 6.4.1 of \cite{KK3}
the condition of strongly regularity entails regularity in the sense of Definition ($\mathbf{Reg 1}$, $\mathbf{Reg 2}$).



\remark\label{R:asr}
Following \cite[Prop. II.2.2.3]{M} let $Y_1, Y_2$ be two hypersurfaces. The short exact sequence
$$0\to \sho_{X\times S\widehat{|}(Y_1\cup Y_2)\times S}\to \sho_{X\times S\widehat{|}Y_1 \times S}\oplus\sho_{X\times S\widehat{|}Y_2\times S} \to 
\sho_{X\times S\widehat{|}(Y_1\cap Y_2)\times S}\to 0$$
induces for any $\shm\in \rD^\rb_{\srhol}(\DXS)$ a distinguished triangle which permits to prove that
$$\Rhom_{\DXS}(\shm, \sho_{X\times S})_{|(Y_1\cap Y_2)\times S}\to \Rhom_{\DXS}(\shm, \sho_{X\times S\widehat{|}(Y_1\cap Y_2)\times S})$$ is an isomorphism too.  
Hence for any closed analytic subset $Z\subseteq X$ the natural morphism 
$$\Rhom_{\DXS}(\shm, \sho_{X\times S})_{|Z\times S}\to \Rhom_{\DXS}(\shm, \sho_{X\times S\widehat{|}Z\times S})$$ is an isomorphism
(since this is a local condition and we can reduce to a finite intersection of hypersurfaces).

\begin{remark}\label{R:ic}
Let $\shm\in \rD^\rb_{\hol}(\shd_{X\times S/S})$, let $Y$ be a closed analytic subset of $X$
and set $\tilde{Y}:=Y\times S$ for short.
By applying the solution functor to the distinguished triangle
$R\Gamma_{[\tilde{Y}]}(\shm)\to \shm \to \shm(\ast (\tilde{Y}))\stackrel{+}\to$
and according to the natural isomorphism $ \Rhom_{\DXS}(\shm, \sho_{X\times S\widehat{|}\tilde{Y}})\simeq
\Rhom_{\DXS}(R\Gamma_{[\tilde{Y}]}(\shm), \sho_{X\times S})$ 
we get the distinguished triangle
\begin{equation}\label{E:dt}\scalebox{0.83}{
\xymatrix@-15pt{
\Rhom_{\DXS}(\shm(\ast \tilde{Y}), \sho_{X\times S})_{|_{\tilde{Y}}}\ar[r] &
\Rhom_{\DXS}(\shm, \sho_{X\times S})_{|_{\tilde{Y}}}\ar[r] &
 \Rhom_{\DXS}(\shm, \sho_{X\times S\widehat{|}{\tilde{Y}}})_{|_{\tilde{Y}}} \\ }}
\end{equation}
which shows that $\shm$ is strongly regular along $Y$ if and only if
the complex
$\Rhom_{\DXS}(\shm(\ast (Y\times S)), \sho_{X\times S})_{|_{Y\times S}}=0$.
\end{remark}

\begin{proposition}\label{L:tor0}
Let $\shm\in \mathrm{Mod}_{\rhol}(\shd_{X\times S/S})$ be such that
 $\supp(M)\subseteq X\times T$ with $\dim T=0$. Then $\tilde{\shm}:=\shd_{X\times S}\otimes_{\DXS}\shm$ is a regular holonomic $\shd_{X\times S}$-module. 
\end{proposition}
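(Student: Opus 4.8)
The plan is to reduce the statement to the classical (absolute) fact that a holonomic $\shd_X$-module supported on a point is regular, together with the observation that a relative $\DXS$-module supported on $X\times T$ with $\dim T=0$ is, locally on $S$, pulled back from such an absolute module. First I would work locally near a point $s_0\in T$, choosing a local coordinate $s$ on $S$ vanishing at $s_0$; since $T$ is discrete this presents no loss of generality. The module $\shm$ being supported on $X\times\{s_0\}$ and holonomic over $\DXS$, Kashiwara's equivalence in the relative setting (the relative analogue of the result quoted in the proof of Lemma~\ref{L12}, \cite[Th.~4.33]{Ka2}) identifies $\shm$ with ${}_Di_{X\times\{s_0\}*}\shn$ for a $\shd_{X\times\{s_0\}/\{s_0\}}=\shd_X$-module $\shn$, which is holonomic; being an absolute holonomic module supported on a point-set base, $\shn$ is automatically regular.

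The key step is then to compute $\tilde\shm:=\shd_{X\times S}\otimes_{\DXS}\shm$ in these terms. Because $\shm$ is annihilated by a power of $s$ (as it is supported on $s=0$ and coherent), the relative-to-absolute extension of scalars interacts with the $s$-direction as follows: $\shd_{X\times S}=\DXS\langle\partial_s\rangle$, and $\tilde\shm\simeq\shd_{X\times S}\otimes_{\DXS}\shm$ acquires the $\partial_s$-action freely. Concretely, if $s^N$ kills $\shm$, then $\tilde\shm$ is a successive extension of copies of the pushforward of $\shn$ under the closed embedding $\{s_0\}\hookrightarrow S$ at the level of $\shd$-modules (one copy for each "level" in the $s$-adic filtration), hence a finite iterated extension of $\shd_X$-modules pushed forward from a point. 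I would make this precise by filtering $\shm$ by the subobjects $s^k\shm$ and arguing that each graded piece $s^k\shm/s^{k+1}\shm$ is a $\shd_X$-module (killed by $s$), regular holonomic, so that $\tilde\shm$ has a finite filtration with regular holonomic graded pieces after $\partial_s$-induction; since regular holonomicity is stable under extensions and under the direct image by a closed embedding, $\tilde\shm$ is regular holonomic.

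The main obstacle I anticipate is the bookkeeping in the second step: one must check that extension of scalars $\shd_{X\times S}\otimes_{\DXS}(-)$ is exact on the relevant modules (or at least controls the cohomology), and that it genuinely produces the $\partial_s$-induction of an absolute module rather than something larger — in other words that no $\sho_S$-torsion phenomena or non-coherence creep in when passing from $\DXS$ to $\shd_{X\times S}$. Holonomicity of $\shm$ over $\DXS$ and the support condition should guarantee coherence of $\tilde\shm$ over $\shd_{X\times S}$ and finiteness of the $s$-adic filtration; once that is in hand, the conclusion follows from the classical Kashiwara–Kawai regularity of holonomic modules supported on a subvariety together with stability of $\rD^\rb_{\rhol}(\shd_{X\times S})$ under extensions and proper direct image. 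A complementary route, which I would keep in reserve, is to verify regularity of $\tilde\shm$ directly via the curve-testing criterion or via $Li^*_s$: since $\shm$ satisfies $(\mathrm{Reg\,1})$ by hypothesis, one can try to deduce regularity of the absolute module from that of all $Li^*_s\shm$, but this seems less clean than the Kashiwara-equivalence argument above.
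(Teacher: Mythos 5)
Your overall route is the same as the paper's: localize to $T=\{s_0\}$, perform a d\'evissage along the $(s-s_0)$-adic filtration to reduce to the case $(s-s_0)\shm=0$, identify such a module with an external product $\shm_0\boxtimes\shd_S/\shd_S(s-s_0)$ after extension of scalars, and conclude by stability of regular holonomicity under extensions and external products. (Your worry about exactness of $\shd_{X\times S}\otimes_{\DXS}(-)$ is harmless: locally $\shd_{X\times S}=\bigoplus_k\partial_s^k\DXS$ is free as a right $\DXS$-module.) There is, however, one genuine gap. You assert that the absolute $\shd_X$-module you extract from $\shm$ (your $\shn$, and likewise the graded pieces $s^k\shm/s^{k+1}\shm$) is ``automatically regular'' because it is ``supported on a point-set base''. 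This is false: after identifying $X\times\{s_0\}$ with $X$, one simply has a holonomic $\shd_X$-module on $X$, and nothing forces it to be regular. If it were automatic, your argument would prove that $\tilde\shm$ is regular for an \emph{arbitrary} holonomic $\shm$ supported on $X\times\{s_0\}$, which already fails for $\shm=\shm_0\boxtimes\sho_S/\sho_S(s-s_0)$ with $\shm_0$ irregular. The hypothesis $\shm\in\Mod_{\rhol}(\DXS)$ must enter here, and this is exactly how the paper uses it: when $(s-s_0)\shm=0$ one has $\shm_0\simeq\shh^0Li^*_{s_0}\shm$, which is regular holonomic because $Li^*_{s_0}\shm\in\rD^\rb_{\rhol}(\shd_X)$ by $(\mathrm{Reg}\,1)$; for the higher graded pieces one also invokes closure of regular holonomic $\shd_X$-modules under subquotients. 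The ``complementary route'' you keep in reserve is in fact the indispensable ingredient of the main argument.

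A secondary point: the ``relative Kashiwara equivalence'' you invoke for $X\times\{s_0\}\hookrightarrow X\times S$ is not available. The relative equivalence (the one quoted in the proof of Lemma~\ref{L12}) concerns submanifolds of the form $Y\times S$ with $Y\subset X$; transverse to $S$ the sheaf $\DXS$ contains no $\partial_s$, which is precisely why a module supported on $s=s_0$ is only an iterated extension of modules killed by $s-s_0$ rather than the pushforward of a single $\shd_X$-module. Your $s$-adic filtration handles this correctly, so this is a misattribution rather than a flaw, but the first paragraph of your proposal should be rewritten in terms of that filtration from the start.
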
 

\begin{proof}

The statement being local, we may assume that $T=\{s_0\}$.  Hence $\Char(\shm)=\Lambda\times \{s_0\}$, where $\Lambda$ is a Lagrangian $\C^*$-conic  closed analytic subset  in $T^*X$, and, taking local coordinates $(z, s)$ in $X\times S$ such that $s$ vanishes in  $s_0$, there exists $n\in\N$ such that $(s-s_0)^n\shm=0$. Since we are dealing with triangulated categories, by an easy argument by induction on $n$  we may assume that  $n=1$. In that case, we have $\shm\simeq \shm_0\boxtimes \sho_S/\sho_S(s-s_0)$, where, by the assumption of relative regularity, $\shm_0$ is a regular holonomic $\shd_X$-module satisfying $\Char(\shm_0)=\Lambda$. By construction $\tilde{\shm}\simeq \shm_0\boxtimes \shd_S/\shd_S(s-s_0)$ and $\Char(\tilde{\shm})=\Lambda\times T^*_{T}S:=\tilde{\Lambda}$.

Therefore $\tilde{\shm}$ is a regular holonomic $\shd_{X\times S}$-module since the category of regular holonomic $\shd$-modules is closed under external tensor product.

\end{proof}

As an immediate consequence of loc. cit. \cite{KK3} we get:

\begin{corollary}\label{L:tor}

A complex $\shm\in \rD^\rb_{\rhol}(\shd_{X\times S/S})$ satisfying
 $\supp(M)\subseteq X\times T$ with $\dim T=0$ 
 is strongly regular, that is $\shm\in \rD^\rb_{\srhol}(\shd_{X\times S/S})$.
\end{corollary}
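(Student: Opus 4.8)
The plan is to deduce Corollary~\ref{L:tor} from Proposition~\ref{L:tor0} together with the Kashiwara--Kawai theorem (Theorem 6.4.1 of \cite{KK3}) and the characterization of strong regularity along a hypersurface given in Definition~\ref{D:sr}. First I would reduce to the case where $\shm$ is concentrated in a single degree: since $\rD^\rb_{\srhol}(\DXS)$ is a triangulated subcategory (it is defined by a condition on cohomology, hence compatible with truncation), it suffices to prove the statement for $\shm\in\Mod_{\rhol}(\DXS)$ with $\supp(\shm)\subseteq X\times T$, $\dim T=0$.

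Next, fix a hypersurface $Y\subset X$ and set $\tilde Y=Y\times S$. I want to show that the natural morphism $(\ast\ast)$ of Definition~\ref{D:sr} is an isomorphism. The key point is to transfer the question from relative $\shd$-modules to absolute ones via the functor $\shm\mapsto\tilde\shm:=\shd_{X\times S}\otimes_{\DXS}\shm$, which by Proposition~\ref{L:tor0} produces a (genuine) regular holonomic $\shd_{X\times S}$-module. Concretely, using the adjunction-type identity already exploited in the proof of Lemma~\ref{L12} (namely $\Rhom_{\DXS}(\shm,-)\simeq\Rhom_{\shd_{X\times S}}(\tilde\shm,-)$ when the target is an $\shd_{X\times S}$-module such as $\sho_{X\times S}$ or its formal completion), the morphism $(\ast\ast)$ for $\shm$ is identified with the analogous morphism
$$\Rhom_{\shd_{X\times S}}(\tilde\shm,\sho_{X\times S})_{|\tilde Y}\to \Rhom_{\shd_{X\times S}}(\tilde\shm,\sho_{X\times S\widehat{|}\tilde Y})$$
for the absolute regular holonomic module $\tilde\shm$. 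Now $Y\times S$ is a (possibly non-smooth, but that is handled as in Remark~\ref{R:asr} by reducing to hypersurfaces) complex analytic subset of $X\times S$, and the comparison isomorphism between solutions of a regular holonomic $\shd$-module and solutions in the formal completion along an arbitrary analytic subset is precisely the content of Theorem 6.4.1 of \cite{KK3}. This gives the desired isomorphism, so $\shm$ is strongly regular along every hypersurface $Y$, hence strongly regular.

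I expect the only genuine subtlety to be the bookkeeping in the change-of-rings step: one must check that $\Rhom_{\DXS}(\shm,\sho_{X\times S})$ and $\Rhom_{\DXS}(\shm,\sho_{X\times S\widehat{|}\tilde Y})$ really do coincide with the corresponding $\Rhom$'s over $\shd_{X\times S}$ applied to $\tilde\shm$, compatibly with restriction to $\tilde Y$ and with the natural morphism $(\ast\ast)$. This is essentially the same manipulation performed in the proof of Lemma~\ref{L12}, using \cite[Prop.\ 2.2.2]{M} and \cite[Prop.\ II.2.2.3]{M}, and presents no real obstacle; everything else is a direct citation of the Kashiwara--Kawai theorem. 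Finally, since the hypothesis is stable under the triangles of a bounded complex, the single-module case proved above propagates to all of $\rD^\rb_{\rhol}(\shd_{X\times S/S})$ supported on $X\times T$ with $\dim T=0$, which is the assertion.
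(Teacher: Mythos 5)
Your proof is correct and is essentially the argument the paper intends: it states the corollary as an immediate consequence of Proposition~\ref{L:tor0} and Theorem~6.4.1 of \cite{KK3}, and your reduction to a single cohomology module, the change of rings to $\tilde\shm=\shd_{X\times S}\otimes_{\DXS}\shm$, and the appeal to Kashiwara--Kawai are exactly the implicit steps. (Your parenthetical appeal to Remark~\ref{R:asr} is unnecessary --- and would be circular, since that remark assumes strong regularity --- but it is harmless because $Y\times S$ is already an analytic subset of $X\times S$ to which the Kashiwara--Kawai theorem applies directly.)
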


\subsection{Strong regularity for $d_X=1=d_S$}

 As defined in \cite{MFCS3}, a property in $X\times S$ is satisfied generically on $S$ if it is satisfied on $X\times S^*$, where $S^*$ is the complementary of a discrete subset $S_0$ in $S$.

We have the relative version of Theorem 6.4.1 of \cite{KK3}: 

\begin{theorem}\label{T1}
Let $\shm\in\rD^\rb_{\rhol}(\DXS)$. Then, generically on $S$, 
for any $x\in X$, the natural morphism 
$$\scalebox{0.94}{
\xymatrix@-12pt{
(\ast \shm): Rhom_{\DXS}(\shm, \sho_{X\times S})_{|_{\{x\}\times S^*}}\ar[r] & \Rhom_{{\shd_{X\times S^*/S^*}}}(\shm_{|_{X\times S^*}}, \sho_{X\times S^*\widehat{|}\{x\}\times S^*}) \\ }
}$$ 
is an isomorphism.
Conversely, if, for a given $\shm\in\rD^\rb_{\hol}(\DXS)$,  $(\ast \shm)$ is an isomorphism for each $x\in X$, with $S_0=\emptyset$, then $\shm\in\rD^\rb_{\rhol}(\DXS)$.
\end{theorem}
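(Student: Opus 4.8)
The plan is to reduce Theorem~\ref{T1} to the absolute Kashiwara--Kawai theorem (Theorem 6.4.1 of \cite{KK3}) by slicing $S$ into points, using the compatibility of all the relevant functors with $Li^*_s$. First I would treat the two implications separately but with a common technical core, namely an identification, for generic $s\in S$ (or for \emph{every} $s$ in the converse), of the complex $\Rhom_{\DXS}(\shm,\sho_{X\times S})_{|\{x\}\times S}$ after applying $Li^*_s$ with the absolute solution complex $\Rhom_{\shd_X}(Li^*_s\shm,\sho_X)_{|\{x\}}$, and similarly for the formal side using Proposition~\ref{P:for}, which gives $Li^*_s\sho_{X\times S\widehat{|}\{x\}\times S}\simeq\sho_{X\widehat{|}\{x\}}$. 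The key compatibility statement is that for a holonomic $\DXS$-complex the formation of $\Rhom_{\DXS}(-,\sho_{X\times S})$ commutes with $Li^*_s$ up to the needed degeneration; this is where I would cite the base-change results of \cite{MFCS1,MFCS2} (the relative Riemann--Hilbert machinery), since the essential point is that for a regular holonomic relative complex the relative solution complex restricts under $Li^*_s$ to the absolute solution complex of $Li^*_s\shm$. Granting that, the natural morphism $(\ast\shm)$ becomes, after $Li^*_s$, exactly the Kashiwara--Kawai comparison morphism for the regular holonomic $\shd_X$-complex $Li^*_s\shm$ at the point $x$.

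For the direct implication, I would argue as follows. Fix a relatively compact open $X_0\subset X$ and, by \cite{MFCS3}, outside a discrete $S_0\subset S$ the complex $\shm$ is ``generically nice'': its characteristic variety is a product $\Lambda\times S^*$ and $Li^*_s\shm$ is regular holonomic on $X$ with characteristic variety $\Lambda$, uniformly in $s\in S^*$. By the absolute Kashiwara--Kawai theorem, for each $s\in S^*$ and each $x$ the morphism $Li^*_s(\ast\shm)$ is an isomorphism. I would then invoke Nakayama-type faithfulness of $Li^*_s$ on (relative holonomic, or coherent enough) complexes over $p^{-1}\sho_S$ with $d_S=1$: a morphism of such complexes whose $Li^*_s$ is an isomorphism for all $s$ in a dense open is itself an isomorphism on $X\times S^*$. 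The cone of $(\ast\shm)$ is a bounded complex of $p^{-1}\sho_{S^*}$-modules of the relevant type, so vanishing of all its $Li^*_s$ forces it to vanish. This needs a small lemma that the cone is ``constructible enough'' (coherent over $p^{-1}\sho_S$ after restriction) so that the fibrewise criterion applies; the torsion/strict decomposition recalled in the text and Corollary~\ref{L:tor} handle the stalk-by-stalk bookkeeping.

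For the converse, no genericity is available, so the argument must be: if $Li^*_s(\ast\shm)$ is an isomorphism for every $s\in S$ and every $x\in X$, then by Kashiwara--Kawai (applied in reverse) $Li^*_s\shm$ is regular holonomic on $X$ for every $s$, which is exactly $(\mathrm{Reg 2})$; since $d_S=1$, Proposition~\ref{Prop:1} upgrades this to $(\mathrm{Reg 1})$, i.e.\ $\shm\in\rD^\rb_{\rhol}(\DXS)$. The subtlety here is that one does not directly know $(\ast\shm)$ itself is an iso before knowing regularity, so I would not use the faithfulness lemma in this direction; instead I would check that $Li^*_s$ of the \emph{natural} morphism $(\ast\shm)$ is the Kashiwara--Kawai morphism for $Li^*_s\shm$, which requires that $Li^*_s\Rhom_{\DXS}(\shm,\sho_{X\times S})\to\Rhom_{\shd_X}(Li^*_s\shm,\sho_X)$ is defined and compatible even when $\shm$ is only assumed holonomic (not yet regular) --- this holds by the general base-change formula for solution complexes, only the \emph{isomorphism} property of the latter comparison is what regularity would buy, and here it is supplied hypothesis-by-hypothesis through $(\ast\shm)$ itself.

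The main obstacle I expect is the commutation of $\Rhom_{\DXS}(-,\sho_{X\times S})$ with $Li^*_s$ and its compatibility with restriction to $\{x\}\times S$ and with the formal completion: concretely, producing a canonical identification $Li^*_s\bigl(\Rhom_{\DXS}(\shm,\sho_{X\times S})_{|\{x\}\times S}\bigr)\simeq\Rhom_{\shd_X}(Li^*_s\shm,\sho_X)_{|\{x\}}$ natural enough to carry the morphism $(\ast\shm)$ to the Kashiwara--Kawai morphism. This is a derived base-change statement that is ``known'' in the relative $\shd$-module formalism of \cite{MFCS1,MFCS2} but needs to be assembled carefully, paying attention to the non-flatness of $\sho_{X\times S\widehat{|}\{x\}\times S}$ and to the fact, flagged in the introduction, that $\RH^S$ and hence these comparisons behave well under restriction to submanifolds (such as $\{x\}\times S$ and $X\times\{s\}$) but not under general morphisms --- so the slicing must be done only along coordinate submanifolds, which is exactly what is available here.
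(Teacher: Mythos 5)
Your proposal follows essentially the same route as the paper: apply $Li^*_s$ to reduce $(\ast\shm)$ to the absolute Kashiwara--Kawai comparison for $Li^*_s\shm$ (using Proposition~\ref{P:for} for the formal side), establish $\sho_{S}$-coherence of both sides generically (the paper gets this from \cite[Th.~3.7]{MFCS1} and \cite[Prop.~2.2(4)]{MFCS3} via Lemma~\ref{L12}), and conclude by the Nakayama-type criterion of \cite[Prop.~1.3]{MFCS2}, with the converse obtained by running Kashiwara--Kawai backwards fibrewise and invoking the equivalence of $(\mathrm{Reg\,1})$ and $(\mathrm{Reg\,2})$. The only difference is one of packaging: where you propose a ``cone is constructible enough'' lemma, the paper pins down the needed coherence of the formal-solution side directly through the generic holonomicity of $R\Gamma_{[\{x\}\times S^*]}(\shm)$.
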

\begin{proof}
 a) Let us assume that $\shm\in\rD^\rb_{\rhol}(\DXS)$, 
 or equivalently $\shm\in\rD^\rb_{\hol}(\DXS)$ and for each $s\in S$, $Li^*_s\shm\in\rD^\rb_{\rhol}(\shd_X)$. 
 According to \cite[Th. 3.7]{MFCS1}, $\Rhom_{\DXS}(\shm, \sho_{X\times S})_{|\{x\}\times S}\in\rD^\rb_{\coh}(\sho_S)$, where we identify $\{x\}\times S$ with $S$. 
%
%

By \cite [Prop. 2.2 (4)]{MFCS3}, away of a discrete subset $S_0\subset S$, for any $x\in X$ 
 we have $R\Gamma_{[\{x\}\times S^*]}(\shm_{|_{X\times S^*}})\in\rD^\rb_{\hol}({\mathcal D}_{X\times S^*/S^*})$.
 Hence the morphism $(\ast \shm)$ is a morphism between complexes satisfying the finiteness condition of \cite[Prop 1.3]{MFCS2}. Therefore $(\ast \shm)$ will be an isomorphism provided that, for each $s\in S^*$, $Li^*_s(\ast \shm)$ is an isomorphism. 
 
 b) Let us now prove the converse. 
 If $(\ast \shm)$ is an isomorphism for each $x\in X$, then, after applying $Li^*_s$, in accordance with isomorphism $(\ast\ast)$ 
of Definition~\ref{D:sr}
 together with  \cite[Theorem 6.4.1]{KK3}, we conclude that, for each $s\in S$, $Li^*_s\shm$ is regular holonomic, hence $\shm\in\rD^\rb_{\rhol}(\DXS)$.
\end{proof}

\begin{corollary}\label{Lem:srii}
A complex $\shm$
in $\rD^\rb_{\rhol}(\DXS)$
is strongly regular if and only if it satisfies the equivalent conditions below:

i) $R\Gamma_{[\{x\}\times S]}(\shm) \in \rD^\rb_{\hol}(\DXS)$ for each $x\in X$

ii) 
${}_Di ^{\ast}_{\{x\}\times S}\shm\in \rD^\rb_{\coh}(\sho_S)$ for each $x\in X$.
\end{corollary}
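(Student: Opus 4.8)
The plan is to prove Corollary~\ref{Lem:srii} by unwinding the definitions and invoking the structural results already established. First I would observe that, by Remark~\ref{R:ic} applied with $Y=\{x\}$, strong regularity of $\shm$ along $\{x\}$ is equivalent to the vanishing of $\Rhom_{\DXS}(\shm(\ast(\{x\}\times S)),\sho_{X\times S})_{|_{\{x\}\times S}}$, which in turn (via the distinguished triangle $R\Gamma_{[\{x\}\times S]}(\shm)\to\shm\to\shm(\ast(\{x\}\times S))\stackrel{+}\to$ and Lemma~\ref{L12}$(i)$) says that the natural morphism $\Rhom_{\DXS}(\shm,\sho_{X\times S})_{|_{\{x\}\times S}}\to\Rhom_{\DXS}(R\Gamma_{[\{x\}\times S]}(\shm),\sho_{X\times S})_{|_{\{x\}\times S}}$ is an isomorphism. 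Since $\dim X=1$, the only hypersurfaces of $X$ are discrete sets of points, so strong regularity of $\shm$ is exactly the conjunction over all $x\in X$ of strong regularity along $\{x\}$; this reduces the corollary to a pointwise statement.

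Next I would establish the equivalence of (i) and (ii): by Kashiwara's equivalence (the relative version of \cite[Th.~4.33]{Ka2} used in Lemma~\ref{L12}), $R\Gamma_{[\{x\}\times S]}(\shm)\simeq {}_D{i_{\{x\}\times S}}_*\,{}_Di^*_{\{x\}\times S}\shm$, so $R\Gamma_{[\{x\}\times S]}(\shm)\in\rD^\rb_{\hol}(\DXS)$ if and only if ${}_Di^*_{\{x\}\times S}\shm\in\rD^\rb_{\hol}(\shd_{S/S})=\rD^\rb_{\coh}(\sho_S)$, using that for $d_X=1$ the point $\{x\}\times S\cong S$ has $\shd_{S/S}=\sho_S$ and holonomicity over the base coincides with $\sho_S$-coherence. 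This is the content of the equivalence $(i)\Leftrightarrow(ii)$.

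The remaining task, and the heart of the matter, is to show that for $\shm\in\rD^\rb_{\rhol}(\DXS)$ the pointwise strong regularity condition is equivalent to condition (i). One direction is nearly formal: if $R\Gamma_{[\{x\}\times S]}(\shm)\in\rD^\rb_{\hol}(\DXS)$ for each $x$, then the morphism $(\ast\shm)$ of Theorem~\ref{T1} relates two complexes satisfying the finiteness hypothesis of \cite[Prop.~1.3]{MFCS2}, so it is an isomorphism once it is so after each $Li^*_s$; and applying $Li^*_s$ yields, via Proposition~\ref{P:for} and the absolute Kashiwara--Kawai theorem \cite[Th.~6.4.1]{KK3} applied to the regular holonomic $\shd_X$-module $Li^*_s\shm$, exactly the absolute strong-regularity isomorphism — hence $\shm$ is strongly regular along $\{x\}$, with $S_0=\emptyset$. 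Conversely, if $\shm$ is strongly regular, then $(\ast\ast)$ holds along every $\{x\}$, and the distinguished triangle of Remark~\ref{R:ic} forces $\Rhom_{\DXS}(\shm(\ast(\{x\}\times S)),\sho_{X\times S})_{|_{\{x\}\times S}}=0$; since $\shm(\ast(\{x\}\times S))$ is holonomic and $\shm$ is holonomic, $R\Gamma_{[\{x\}\times S]}(\shm)$ is holonomic, giving (i).

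The main obstacle I anticipate is the ``finiteness'' bookkeeping in the first converse direction: one must check that the two sides of $(\ast\shm)$ genuinely satisfy the hypotheses of \cite[Prop.~1.3]{MFCS2} so that checking an isomorphism can be done fibrewise by $Li^*_s$ — this is where condition (i), i.e.\ the holonomicity of $R\Gamma_{[\{x\}\times S]}(\shm)$, is used in an essential way (in Theorem~\ref{T1} this holonomicity was only available generically, via \cite[Prop.~2.2(4)]{MFCS3}, whereas here it is hypothesized everywhere). Once that point is secured, the rest is a diagram chase combining Lemma~\ref{L12}, Proposition~\ref{P:for}, Remark~\ref{R:ic}, and \cite[Th.~6.4.1]{KK3}, all of which are available. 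I would present the proof as: (1) reduce to pointwise strong regularity using $d_X=1$; (2) prove $(i)\Leftrightarrow(ii)$ by Kashiwara's equivalence; (3) prove strong regularity $\Leftrightarrow$ (i) by the triangle of Remark~\ref{R:ic} together with the fibrewise criterion and \cite[Th.~6.4.1]{KK3}.
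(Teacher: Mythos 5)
Your reduction to pointwise conditions (using $d_X=1$), your proof of $(i)\Leftrightarrow(ii)$ via Kashiwara's equivalence, and your argument that $(i)$ implies strong regularity (finiteness of both sides of $(\ast\shm)$, then the fibrewise criterion together with Proposition~\ref{P:for} and \cite[Th.~6.4.1]{KK3}) are all sound and agree with the paper. The gap is in the converse direction. Having obtained the vanishing of $\Rhom_{\DXS}(\shm(\ast(\{x\}\times S)),\sho_{X\times S})_{|_{\{x\}\times S}}$ from Remark~\ref{R:ic}, you conclude with ``since $\shm(\ast(\{x\}\times S))$ is holonomic and $\shm$ is holonomic, $R\Gamma_{[\{x\}\times S]}(\shm)$ is holonomic.'' In the relative setting the holonomicity of the localization $\shm(\ast(\{x\}\times S))$ is \emph{not} a consequence of the holonomicity (or even the regular holonomicity) of $\shm$: it is only known generically on $S$, by \cite[Prop.~2.2~(4)]{MFCS3} --- which is precisely why Theorem~\ref{T1} carries the caveat ``generically on $S$'' --- and, via the localization triangle, it is \emph{equivalent} to condition $(i)$. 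So this step assumes the conclusion. Nor does the vanishing of the localized solution complex on $\{x\}\times S$ by itself yield any coherence or holonomicity statement.

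The paper closes this direction by a different route: strong regularity along $\{x\}$ identifies $\Rhom_{\DXS}(\shm,\sho_{X\times S})_{|_{\{x\}\times S}}$, which lies in $\rD^\rb_{\coh}(\sho_S)$ by \cite[Th.~3.7]{MFCS1} because $\shm$ is holonomic, with $\Rhom_{\DXS}(\shm,\sho_{X\times S\widehat{|}\{x\}\times S})\simeq \Rhom_{\sho_S}({}_Di^*_{\{x\}\times S}\shm,\sho_S)$ via Lemma~\ref{L12}(ii); coherence of this $\sho_S$-dual then gives ${}_Di^*_{\{x\}\times S}\shm\in\rD^\rb_{\coh}(\sho_S)$, i.e.\ condition $(ii)$, and hence $(i)$. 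You should replace your converse argument by this duality argument (or else supply an independent proof that the localization along $\{x\}\times S$ of a holonomic $\DXS$-module is holonomic at every $s\in S$, which is not available in the paper and is the very content of the condition being characterized).
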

\begin{proof}
 Let us prove first the equivalence $i)\Leftrightarrow ii)$. We have
$R\Gamma_{[\{x\}\times S]}(\shm)\simeq  {}_Di_{\{x\}\times S\; *}{}_Di^*_{\{x\}\times S}(\shm)[ d_X ]$
and hence $R\Gamma_{[\{x\}\times S]}(\shm) \in \rD^\rb_{\hol}(\DXS)$ if and only if
${}_Di ^{\ast}_{\{x\}\times S}\shm\in \rD^\rb_{\coh}(\sho_S)$.

Now, given $\shm\in\rD^\rb_{\srhol}(\DXS)$, in view of the definition we have a natural isomorphism
$$\Rhom_{\sho_S}({}_Di^*_{\{x\}\times S}\shm, \sho_S)\simeq\Rhom_{\DXS}(\shm, \sho_{X\times S})_{|_{\{x\}\times S}} \in  \rD^\rb_{\coh}(\sho_S)$$ which
proves that ${}_Di^*_{\{x\}\times S}\shm\in \rD^\rb_{\coh}(\sho_S)$ too. 

On the other side if ${}_Di ^{\ast}_{\{x\}\times S}\shm\in \rD^\rb_{\coh}(\sho_S)$ for any $x\in X$ we obtain that
$\Rhom_{\DXS}(\shm, \sho_{X\times S\widehat{|}\{x\}\times S}) \in \rD^\rb_{\coh}(\sho_S)$ and so
by \cite[Prop. 2.2]{MFCS1} the natural morphism 
$\Rhom_{\DXS}(\shm, \sho_{X\times S})|_{\{x\}\times S}\to \Rhom_{\DXS}(\shm, \sho_{X\times S^*\widehat{|}\{x\}\times S})$
is an isomorphism.
\end{proof}

\section{Application to the functor $\RH^S$}\label{sec:3}

In this section we briefly recall the relative Riemann-Hilbert functor $\RH^S(\cdot)$ introduced in \cite{MFCS2} and state some complementary results needed in the sequel.
We suppose $d_S=1$.
\subsection{Reminder on relative subanalytic sites and relative subanalytic sheaves}\label{subsec:relsubanalytic}
For details on this subject we refer to \cite{TL}. We also refer to \cite{KS5} as a foundational paper and to \cite{KS3} for a detailed exposition on the general theory of sheaves on sites.

Let $X$ and $S$ be real or complex analytic manifolds where we consider the family of open subanalytic subsets. On $X\times S$, $\mathcal{T}$ is the family consisting of finite unions of open relatively compact subsets and the family~$\mathcal{T}'$ consists of finite unions of open relatively compact sets of the form $U\times V$. The associated sites $({X\times S})_{\mathcal{T}}$ and $({X\times S})_{\mathcal{T}'}$ are nothing more than, respectively, $(X\times S)_{sa}$ and the product of sites $X_{sa}\times S_{sa}$.

We shall denote by $\rho$, without reference to $X\times S$ unless otherwise specified, the natural functor of sites $\rho:X\times S \to (X\times S)_{sa}$ associated to the inclusion $Op((X\times\times S)_{sa}) \subset Op(X\times S)$. Accordingly, we shall consider the associated functors $\rho_{*}, \rho^{-1}, \rho_!$.

We shall also denote by $\rho':X\times S \to (X\times S)_{\mathcal{T}'}$ the natural functor of sites. Following \cite{KS3} we have functors $\rho'_*$ and $\rho'_!$ from $\Mod(\CC_{X\times S})$ to $\Mod(\CC_{X_{sa}\times S_{sa}})$.

Subanalytic sheaves are defined on the subanalytic site of a real analytic manifold, and relative subanalytic sheaves are defined on the relative subanalytic site recalled above. We refer to \cite{TL}
for the detailed  construction of the relative subanalytic sheaves  $\shd^{t,S\sharp}_{\XS}$ (where $X$ and $S$ are real analytic) and $\sho^{t,S\sharp}_{\XS}$ (in the complex framework).

They are both $\rho_!\DXS$-modules (either in the real or the complex case) as well as a $\rho'_*p^{-1}\sho_S$-module and both structures commute.
Moreover, when $X$ is complex, considering the complex conjugate structure $\overline{X}$ on $X$ (resp. $\overline{S}$ on $S$) and the underlying real analytic structure $X_{\R}$ (resp. $S_{\R}$), 
we have $$\sho^{t,S\sharp}_{\XS}=\Rhom_{\rho'_!\shd_{\overline{X}\times\overline{S}}}(\rho'_!\sho_{\overline{X}\times\overline{S}},\shd^{t,S\sharp}_{\XS})$$
where we omit the reference to the real structures.
\subsection{Reminder on $\RH^S$ and complementary properties\label{subsec:RHS}}

In the real framework ($X$ and $S$ being real analytic manifolds, $d_S=1$), for $F\in\rD^\rb_\rc(p^{-1}\sho_S)$ we set

$$\TH^S(F):=
\rho'^{-1}\rh_{\rho'_*\pOS}(\rho'_*F, \Db^{t,S,\sharp}_{\XS}).$$

If $X$ is a complex manifold of complex dimension $d_X$ and $S$ is a complex manifold of dimension one,
$\RH^S:\rD^\rb_\rc(\pOS)\to\rD^\rb_\rhol(\DXS)$ is given by the assignment
$$F\to \RH^S(F):=
\rho'^{-1}\rh_{\rho'_*\pOS}(\rho'_*F, \sho^{t,S,\sharp}_{\XS})[d_X]$$ $$\simeq  \Rhom_{\shd_{\overline{X}\times\overline{S}}}(\sho_{\overline{X}\times\overline{S}}, \TH^S(F))$$
the last isomorphism being called here "realification procedure" for short.

\begin{proposition}\label{RHSloc}
Let $Y$ be a complex hypersurface of $X$. Then, for any $F\in\rD^\rb_\rc(\pOS)$ there is a natural morphism $$\RH^S(F)(\ast (Y\times S))\simeq \RH^S(F\otimes \C_{(X\setminus Y)\times S}).$$
In particular, if $F\in\rD^\rb_\cc(\pOS)$, 
\begin{itemize}
\item{1. $\RH^S(F)(\ast (Y\times S))$ belongs to $\rD^\rb_\rhol(\DXS)$,}
\item{2. There is a natural isomorphism $\RH^S(F\otimes \C_{Y\times S})\simeq R\Gamma_{[Y\times S]}(\RH^S(F))$ and so $R\Gamma_{[Y\times S]}(\RH^S(F))$ also belongs to $\rD^\rb_\rhol(\DXS)$.}
\end{itemize}
\end{proposition}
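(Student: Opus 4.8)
\textbf{Proof proposal for Proposition~\ref{RHSloc}.}
The plan is to reduce everything to the behaviour of $\RH^S$ (equivalently $\TH^S$) under localization along a hypersurface, and to exploit the constructions recalled in Subsections~\ref{subsec:relsubanalytic}--\ref{subsec:RHS}. First I would establish the main natural morphism. On the subanalytic side, localizing a $\DXS$-module along $Y\times S$ amounts to applying $R\shhom_{\DXS}(\sho_{X\times S}(\ast(Y\times S)),\cdot)$, and this should correspond, via the defining formula $\RH^S(F)=\rho'^{-1}\rh_{\rho'_*\pOS}(\rho'_*F,\sho^{t,S,\sharp}_{\XS})[d_X]$, to replacing $F$ by its restriction to the open set $(X\setminus Y)\times S$, i.e.\ to $F\otimes\C_{(X\setminus Y)\times S}$. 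Concretely, I would use that $\C_{(X\setminus Y)\times S}\otimes F$ fits in the distinguished triangle $F\otimes\C_{Y\times S}\to F\to F\otimes\C_{(X\setminus Y)\times S}\xrightarrow{+}$, apply the (exact) functor $\RH^S$, and identify the first term with $R\Gamma_{[Y\times S]}(\RH^S(F))$; this is precisely statement~2, and the localization isomorphism $\RH^S(F)(\ast(Y\times S))\simeq\RH^S(F\otimes\C_{(X\setminus Y)\times S})$ then comes by comparing the two triangles, using that $\RH^S(F)=R\Gamma_{[Y\times S]}(\RH^S(F))\to\RH^S(F)\to\RH^S(F)(\ast(Y\times S))\xrightarrow{+}$ is the canonical triangle defining the localized module. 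The key compatibility to check is that $\RH^S$ intertwines $F\mapsto F\otimes\C_{Y\times S}$ with $R\Gamma_{[Y\times S]}(\cdot)$; I expect this to follow from the fact that $\sho^{t,S,\sharp}_{\XS}$ (and $\Db^{t,S,\sharp}_{\XS}$) is built so that sections supported on a subanalytic closed set behave functorially, together with the commutation of $R\shhom$ with the relative structures, as already used for the realification procedure.

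For the two bulleted consequences, assume now $F\in\rD^\rb_\cc(\pOS)$. For~1, note that $F\otimes\C_{(X\setminus Y)\times S}$ is again $\C$-constructible (being the extension by zero of a constructible sheaf on the open complement, or equivalently the second term of the triangle above with $\C$-constructible outer terms), so $\RH^S(F\otimes\C_{(X\setminus Y)\times S})\in\rD^\rb_\rhol(\DXS)$ directly from the target of the functor $\RH^S:\rD^\rb_\rc(\pOS)\to\rD^\rb_\rhol(\DXS)$ recalled above; by the natural isomorphism this gives $\RH^S(F)(\ast(Y\times S))\in\rD^\rb_\rhol(\DXS)$. For~2, $F\otimes\C_{Y\times S}$ is $\C$-constructible as well, so $\RH^S(F\otimes\C_{Y\times S})\in\rD^\rb_\rhol(\DXS)$, and the identification $\RH^S(F\otimes\C_{Y\times S})\simeq R\Gamma_{[Y\times S]}(\RH^S(F))$ is the content established in the first paragraph; hence $R\Gamma_{[Y\times S]}(\RH^S(F))\in\rD^\rb_\rhol(\DXS)$ too.

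The main obstacle I anticipate is the first paragraph's compatibility statement, i.e.\ proving that $\RH^S$ commutes with $\cdot\otimes\C_{Y\times S}$ on the source and $R\Gamma_{[Y\times S]}$ on the target (equivalently, with localization $(\ast(Y\times S))$). This requires working at the level of the relative subanalytic sheaf $\sho^{t,S,\sharp}_{\XS}$ and verifying that the natural morphism $\rho'_*(F\otimes\C_{(X\setminus Y)\times S})\to\rho'_*F$ induces, after $\rh_{\rho'_*\pOS}(\cdot,\sho^{t,S,\sharp}_{\XS})$ and $\rho'^{-1}$, exactly the localization morphism for $\DXS$-modules; the subtlety is that $Y\times S$ is not open, so one must pass through the triangle with $\C_{Y\times S}$ and control $R\Gamma$ of subanalytic sheaves along $Y\times S$. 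I would handle this by reducing to the open complement $(X\setminus Y)\times S$, where $\rho'_!\C_{(X\setminus Y)\times S}$ and the restriction of $\sho^{t,S,\sharp}$ are well understood, and then invoking the compatibility of $\TH^S$ (hence $\RH^S$ via realification) with restriction to open subanalytic subsets, which is part of the formalism of \cite{TL}. Everything else — the $\C$-constructibility of the various tensor products and the manipulation of distinguished triangles — is routine.
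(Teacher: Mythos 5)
Your overall architecture (reduce to the triangle relating $\C_{(X\setminus Y)\times S}$, $\C_{X\times S}$ and $\C_{Y\times S}$, identify one term, deduce the other) is consistent with how the consequences are organized, and your treatment of the two bulleted items is fine. But the core of the proposition is exactly the step you defer to "the key compatibility to check", and the justification you sketch for it does not work: it is essentially circular. Saying that $\RH^S$ intertwines $F\mapsto F\otimes\C_{Y\times S}$ with $R\Gamma_{[Y\times S]}$ is equivalent to the localization isomorphism you are trying to prove, and neither follows from formal functoriality of $\sho^{t,S,\sharp}_{\XS}$ nor from compatibility of $\TH^S$ with restriction to open subanalytic subsets. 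Restriction to the open complement only controls what happens away from $Y\times S$, whereas the entire content of the proposition is the comparison \emph{along} $Y\times S$: why the $\DXS$-module $\RH^S(F\otimes\C_{(X\setminus Y)\times S})$, which is defined on all of $X\times S$, coincides there with the algebraic localization $\RH^S(F)(\ast(Y\times S))$.

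The missing ingredient is analytic, not formal. The paper first reduces (via \cite[Prop.~3.5]{MFCS2}) to $F=p^{-1}\sho_S\otimes\C_{\Omega\times S}$ and then invokes \cite[Prop.~3.23]{Ka3}: a local equation $f$ of $Y$ acts invertibly on $\tho(\C_{(\Omega\setminus Y)\times S},\Db_{X\times S})$ (a division theorem for tempered distributions), so that the canonical morphism from $\tho(\C_{\Omega\times S},\Db_{X\times S})(\ast(Y\times S))$ to $\tho(\C_{(\Omega\setminus Y)\times S},\Db_{X\times S})$ is an isomorphism; one then transports this through the realification procedure, which commutes with localization. This invertibility is also what \emph{produces} the natural morphism out of $\RH^S(F)(\ast(Y\times S))$ in the first place (universal property of localization applied to $\RH^S(F)\to\RH^S(F\otimes\C_{(X\setminus Y)\times S})$), a construction absent from your outline. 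Without this temperedness input the statement would simply be false: for non-tempered sections, $\shhom(\C_{(X\setminus Y)\times S},\sho_{X\times S})$ is much larger than the localization. A minor additional slip: your triangle should read $F\otimes\C_{(X\setminus Y)\times S}\to F\to F\otimes\C_{Y\times S}\xrightarrow{+}$; the arrows only come out as you wrote them after applying the contravariant functor $\RH^S$.
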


\begin{proof}
Let be given a local equation $f=0$ of $Y$. 
We start by assuming that $F=p^{-1}\sho_S\otimes\C_{\Omega\times S}$ for a relatively compact subanalytic open subset of $X$. Noting that $f$ is invertible on $ \tho(\C_{(\Omega\setminus Y)\times S}, \Db_{X\times S})$, according to \cite[Prop. 3.23]{Ka3},  the natural morphism

$$\tho(\C_{\Omega\times S}, \Db_{X\times S})(\ast (Y\times S))\to \tho(\C_{(\Omega\setminus Y)\times S}, \Db_{X\times S})$$ is an isomorphism. Since localization along $Y\times S$ is an exact functor and commutes with the realification procedure, according to \cite[Prop.3.5]{MFCS2} we conclude that $f$ is invertible on  $\RH^S(F\otimes \C_{(X\setminus Y)\times S})$ in $\rD^\rb_{\rhol}(\DXS)$, for any $F\in \rD^\rb_\cc(\pOS)$, which implies the existence of the morphism of  the statement.  Consequently it is an isomorphism.
The remaining statements follow straightforwardly (see also  \cite[Example 3.20]{MFCS2}).
\end{proof}

\begin{corollary}\label{C iminv}
For any  $F\in\rD^\rb_\cc(\pOS)$ and for any closed submanifold of $X$,  
$R\Gamma_{[Y\times S]}(\RH^S(F))$ is a complex with regular holonomic $\DXS$-cohomologies. Equivalently ${{}_D} i^*_{Y\times S}\RH^S(F)$ is a complex with regular holonomic $\DYS$-cohomologies.
\end{corollary}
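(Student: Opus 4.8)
The plan is to deduce Corollary~\ref{C iminv} from Proposition~\ref{RHSloc} together with the fact, recorded just after Corollary~\ref{L:tor} and in the discussion around Corollary~\ref{Lem:srii}, that strong regularity of an $\RH^S$-image is governed by the vanishing (resp.\ holonomicity) of the associated $R\Gamma_{[Z\times S]}$ complexes. First I would treat the case where $Y$ is a hypersurface: this is precisely part 2 of Proposition~\ref{RHSloc}, which gives $R\Gamma_{[Y\times S]}(\RH^S(F))\simeq\RH^S(F\otimes\C_{Y\times S})\in\rD^\rb_\rhol(\DXS)$. So the content of the corollary for hypersurfaces is already in hand, and the two phrasings (in terms of $R\Gamma_{[Y\times S]}$ and in terms of ${}_Di^*_{Y\times S}$) are interchanged by the isomorphism $R\Gamma_{[Y\times S]}(\RH^S(F))\simeq {}_Di_{Y\times S\,*}{}_Di^*_{Y\times S}(\RH^S(F))[d_X-\dim Y]$ and Kashiwara's equivalence, exactly as in the proof of Corollary~\ref{Lem:srii}~$i)\Leftrightarrow ii)$.

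The step that needs a genuine argument is the passage from a hypersurface $Y$ to an \emph{arbitrary} closed submanifold $Y\subset X$ of positive codimension. Here I would argue locally: near a point of $Y$ one may choose coordinates so that $Y=\{f_1=\dots=f_c=0\}$ is the transverse intersection of the smooth hypersurfaces $Y_i=\{f_i=0\}$, and $Y=Y_1\cap\dots\cap Y_c$. Proceed by induction on $c$. For $c=1$ we are in the hypersurface case above. For the inductive step, write $Y'=Y_1\cap\dots\cap Y_{c-1}$ (a smooth hypersurface inside the submanifold $Y_c$, or directly a submanifold of $X$ of codimension $c-1$), so that $Y=Y'\cap Y_c$ with $Y'$, $Y_c$ and $Y'\cup Y_c$ all closed analytic subsets. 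Apply Proposition~\ref{RHSloc} iteratively, or equivalently the Mayer--Vietoris distinguished triangle
\[
R\Gamma_{[(Y'\cup Y_c)\times S]}(\RH^S F)\to R\Gamma_{[Y'\times S]}(\RH^S F)\oplus R\Gamma_{[Y_c\times S]}(\RH^S F)\to R\Gamma_{[Y\times S]}(\RH^S F)\xrightarrow{+}
\]
(the analogue for $R\Gamma$ of the short exact sequence of formal completions used in Remark~\ref{R:asr}). By the inductive hypothesis $R\Gamma_{[Y'\times S]}(\RH^S F)$ and $R\Gamma_{[Y_c\times S]}(\RH^S F)$ lie in $\rD^\rb_\rhol(\DXS)$; the term $R\Gamma_{[(Y'\cup Y_c)\times S]}(\RH^S F)\simeq\RH^S(F\otimes\C_{(Y'\cup Y_c)\times S})$ also does, since $\C_{(Y'\cup Y_c)\times S}\in\rD^\rb_\cc(\pOS)$ and part~2 of Proposition~\ref{RHSloc} applies to the union of hypersurfaces just as well (it sits in a triangle built from $\C_{Y'\times S}$, $\C_{Y_c\times S}$ and $\C_{Y\times S}$, all $\C$-constructible). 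Since $\rD^\rb_\rhol(\DXS)$ is a triangulated subcategory, the third vertex $R\Gamma_{[Y\times S]}(\RH^S F)$ belongs to it too. As the conclusion is local on $X$ and being regular holonomic is a local property, this settles the general submanifold case.

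Finally, to get the equivalent statement for ${}_Di^*_{Y\times S}\RH^S(F)$, invoke Kashiwara's equivalence in the relative setting: $R\Gamma_{[Y\times S]}(\RH^S F)\simeq{}_Di_{Y\times S\,*}{}_Di^*_{Y\times S}(\RH^S F)$ up to shift, and ${}_Di_{Y\times S\,*}$ is an equivalence onto the subcategory of $\DXS$-modules supported on $Y\times S$, exchanging regular holonomic $\DYS$-modules with regular holonomic $\DXS$-modules supported there (this uses that direct image by a closed embedding preserves regularity, already used in Proposition~\ref{L:tor0}). Hence ${}_Di^*_{Y\times S}\RH^S(F)$ has regular holonomic $\DYS$-cohomologies iff $R\Gamma_{[Y\times S]}(\RH^S F)$ has regular holonomic $\DXS$-cohomologies, which is what we just proved. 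The main obstacle is really the bookkeeping in the induction on the codimension --- making sure that the Mayer--Vietoris triangles are set up with honest closed analytic subsets of the form $Z\times S$ and that Proposition~\ref{RHSloc} (stated for hypersurfaces and, via the cited Remark-style argument, for unions of hypersurfaces) can be fed $\C$-constructible sheaves at each stage; no new analytic input beyond what is already in the excerpt is needed.
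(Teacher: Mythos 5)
Your hypersurface case, your local reduction to $Y=Y_1\cap\dots\cap Y_c$, and your treatment of the second sentence via the relative Kashiwara equivalence all match the paper's (very terse) proof. The problem is the inductive step. You run Mayer--Vietoris on $Y'=Y_1\cap\dots\cap Y_{c-1}$ and $Y_c$, which forces you to control the third vertex $R\Gamma_{[(Y'\cup Y_c)\times S]}(\RH^S F)$. Your justification for that term --- that it ``sits in a triangle built from $\C_{Y'\times S}$, $\C_{Y_c\times S}$ and $\C_{Y\times S}$'' --- is circular: that triangle has $\C_{Y\times S}=\C_{(Y'\cap Y_c)\times S}$ as one of its vertices, so identifying $R\Gamma_{[(Y'\cup Y_c)\times S]}(\RH^S F)$ with $\RH^S(F\otimes\C_{(Y'\cup Y_c)\times S})$ by two-out-of-three presupposes exactly the identification $R\Gamma_{[Y\times S]}(\RH^S F)\simeq\RH^S(F\otimes\C_{Y\times S})$ that you are trying to prove. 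Moreover Proposition~\ref{RHSloc}(2) is stated only for hypersurfaces, and $Y'\cup Y_c$ is not one (nor even equidimensional once $c\geq 3$), so you cannot simply say the proposition ``applies to the union just as well.'' (A cosmetic point: the Mayer--Vietoris triangle for sections with support runs intersection $\to$ sum $\to$ union, not the way you wrote it; this does not affect a two-out-of-three argument.)

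The fix, and what the paper actually does, is to avoid unions altogether and use the composition formula for algebraic local cohomology along intersections: $R\Gamma_{[(Y'\cap Y_c)\times S]}=R\Gamma_{[Y_c\times S]}\circ R\Gamma_{[Y'\times S]}$. Then the induction is a one-line iteration of Proposition~\ref{RHSloc}(2): by the inductive hypothesis $R\Gamma_{[Y'\times S]}(\RH^S F)\simeq\RH^S(F\otimes\C_{Y'\times S})$, and applying the hypersurface case to the new $\C$-constructible coefficient $F\otimes\C_{Y'\times S}$ and the hypersurface $Y_c$ gives
\[
R\Gamma_{[Y\times S]}(\RH^S F)\simeq R\Gamma_{[Y_c\times S]}\bigl(\RH^S(F\otimes\C_{Y'\times S})\bigr)\simeq\RH^S(F\otimes\C_{Y'\times S}\otimes\C_{Y_c\times S})=\RH^S(F\otimes\C_{Y\times S}),
\]
which is regular holonomic because $\RH^S$ sends $\rD^\rb_\cc(\pOS)$ into $\rD^\rb_\rhol(\DXS)$. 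With that replacement your argument is complete and agrees with the paper's.
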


\begin{proof}
The statement being local, we may assume that $Y$ is an intersection of smooth hypersurfaces of $X$ and then conclude by item $2$ of Proposition \ref{RHSloc} that 
$R\Gamma_{[Y\times S]}(\RH^S(F))\simeq \RH^S(F\otimes \C_{Y\times S})$ which implies the first statement.

According to the relative version of \cite[Prop.4.3]{Ka1}, the second statement is equivalent to the first. 
\end{proof}





\subsection{Relative Riemann-Hilbert correspondence and strong regularity}
The first main result in this section is the following:
\begin{theorem}\label{Tstrong}
For any $F\in\rD^\rb_{\cc}(p^{-1}\sho_S)$, $\RH^S(F)\in \rD^\rb_{\srhol}(\DXS)$.
\end{theorem}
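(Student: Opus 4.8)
The plan is to reduce the statement to a pointwise (along hypersurfaces) verification and then invoke Proposition~\ref{RHSloc} together with Remark~\ref{R:ic}. Since $\rD^\rb_{\srhol}(\DXS)$ is defined by the condition that each cohomology $\shh^j$ be strongly regular, and since $\RH^S(F)$ already lies in $\rD^\rb_{\rhol}(\DXS)$ by the construction recalled in \S\ref{subsec:RHS}, it suffices to check that for every complex hypersurface $Y\subset X$ the natural morphism
$$\Rhom_{\DXS}(\RH^S(F), \sho_{X\times S})_{|_{Y\times S}}\to \Rhom_{\DXS}(\RH^S(F), \sho_{X\times S\widehat{|}Y\times S})$$
is an isomorphism, and then pass to cohomology (strong regularity of a complex being equivalent to strong regularity of its cohomology modules along each hypersurface, using that $\Rhom$ against $\sho_{X\times S}$ and against $\sho_{X\times S\widehat{|}Y\times S}$ are compatible with truncation in the appropriate sense — this is where one argues by induction on cohomological length exactly as in the remarks of \S1, using the distinguished triangle $(A)$). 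Being a local statement on $X$, one may moreover assume $Y$ is smooth.

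The core computation is then the following. By Remark~\ref{R:ic}, strong regularity of $\shm:=\RH^S(F)$ along $Y$ is equivalent to the vanishing
$$\Rhom_{\DXS}(\shm(\ast(Y\times S)), \sho_{X\times S})_{|_{Y\times S}}=0.$$
Here is where Proposition~\ref{RHSloc} enters: it gives $\shm(\ast(Y\times S))\simeq \RH^S(F\otimes\C_{(X\setminus Y)\times S})$, and moreover (item 2) $R\Gamma_{[Y\times S]}(\shm)\simeq \RH^S(F\otimes\C_{Y\times S})$. So I would apply the solution functor to the distinguished triangle $R\Gamma_{[Y\times S]}(\shm)\to\shm\to\shm(\ast(Y\times S))\xrightarrow{+}$ and compare with the triangle obtained by applying $\RH^S$ to $\C_{Y\times S}\otimes F\to F\to \C_{(X\setminus Y)\times S}\otimes F\xrightarrow{+}$. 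The point to extract is that $\Rhom_{\DXS}(\RH^S(F\otimes\C_{(X\setminus Y)\times S}), \sho_{X\times S})_{|_{Y\times S}}$ vanishes. Since $\RH^S$ is (by \cite{MFCS2}) a right quasi-inverse to $\pSol$ up to the expected constructibility, solutions of $\RH^S(G)$ recover (a shift of) $G$; for $G=F\otimes\C_{(X\setminus Y)\times S}$ this is supported away from $Y\times S$, so its restriction to $Y\times S$ is zero. The formal-completion side $\Rhom_{\DXS}(\shm(\ast(Y\times S)), \sho_{X\times S\widehat{|}Y\times S})$ vanishes for the same support reason, by Lemma~\ref{L12}(i) and the fact that $R\Gamma_{[Y\times S]}(\shm(\ast(Y\times S)))=0$.

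A cleaner way to organize the last paragraph, avoiding appeal to the as-yet-unproved full Riemann--Hilbert equivalence, is to work directly with the integral-transform expression $\RH^S(F)\simeq \Rhom_{\shd_{\ov X\times\ov S}}(\sho_{\ov X\times\ov S}, \TH^S(F))$ and the description of solutions of objects in the image of $\RH^S$ via \cite[Prop.~3.5, Prop.~1.3]{MFCS2}: one shows $\Rhom_{\DXS}(\RH^S(G), \sho_{X\times S})$ is concentrated on $\supp G$ whenever $G\in\rD^\rb_\cc(\pOS)$, which for $G=F\otimes\C_{(X\setminus Y)\times S}$ kills the restriction to $Y\times S$. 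Combined with Remark~\ref{R:ic} this finishes the smooth case; then Remark~\ref{R:asr} (already handling finite intersections, hence arbitrary closed analytic subsets) upgrades smooth hypersurfaces to general ones, and the induction on cohomological length upgrades the complex statement to strong regularity of each $\shh^j\RH^S(F)$. The main obstacle I anticipate is the bookkeeping needed to guarantee that the morphism $(\ast\ast)$ for the \emph{complex} $\RH^S(F)$ descends correctly to each cohomology module: one must check that the relevant $\Rhom$'s satisfy the finiteness conditions of \cite[Prop.~1.3]{MFCS2} so that being an isomorphism can be tested after $Li^*_s$, and then run the length induction using triangle $(A)$ exactly as in Proposition~\ref{Prop:1}; the support-vanishing input, by contrast, is essentially formal once Proposition~\ref{RHSloc} is in hand.
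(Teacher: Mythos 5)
Your core argument is correct but takes a genuinely different route from the paper's. You combine Remark~\ref{R:ic} with Proposition~\ref{RHSloc} to reduce strong regularity of $\shm=\RH^S(F)$ along $Y$ to the vanishing of $\Rhom_{\DXS}(\RH^S(F\otimes\C_{(X\setminus Y)\times S}),\sho_{X\times S})_{|Y\times S}$, and you get that vanishing from the reconstruction $\pSol\RH^S(G)\simeq G$ together with the fact that $G=F\otimes\C_{(X\setminus Y)\times S}$ has zero stalks on $Y\times S$. That reconstruction is a one-sided statement already proved in \cite{MFCS2} (it is quoted verbatim in the proof of Proposition~\ref{Tequiv1}), so your worry about invoking the ``as-yet-unproved equivalence'' is unfounded and your second, ``cleaner'' detour through $\TH^S$ is unnecessary. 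The paper argues differently: it shows that the right-hand side of $(\ast\ast)$ is $\C$-constructible --- via Lemma~\ref{L12}(ii), Corollary~\ref{C iminv} and \cite[Th.~1.1]{MFCS1} --- so that, by Proposition~\ref{P:for} and the finiteness criterion of \cite[Prop.~1.3]{MFCS2}, the morphism $(\ast\ast)$ can be tested after applying $Li^*_s$, where it becomes the absolute Kashiwara--Kawai isomorphism of \cite{KK3}. Your route buys independence from the smoothness of $Y$ (note that your reduction ``one may assume $Y$ smooth'' is not justified --- a hypersurface need not be smooth even locally --- but in your argument it is simply not needed, whereas the paper's use of $\Di^*_{Y\times S}$ in Lemma~\ref{L12}(ii) does require it); the paper's route buys the constructibility of the formal solution complex, which is of independent use.

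One caveat on the step you rightly flag: descending the complex-level isomorphism $(\ast\ast)$ for $\RH^S(F)$ to strong regularity of each cohomology $\shh^j\RH^S(F)$ (which is what membership in $\rD^\rb_{\srhol}(\DXS)$ literally requires) is not achieved by ``induction on cohomological length using triangle $(A)$'': if the obstruction functor $\shn\mapsto\Rhom_{\DXS}(\shn(\ast(Y\times S)),\sho_{X\times S})_{|Y\times S}$ kills the middle term of a triangle, one only gets that the obstructions of the two outer terms are isomorphic up to shift, not that they vanish, so the two-out-of-three argument does not close. This loose end is, however, shared with the paper's own proof, which likewise establishes only the statement for the complex $\RH^S(F)$.
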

\begin{proof}
Let $\shm:=\RH^S(F)$. We want to prove that, for any closed smooth hypersurface $Y$ of $X$, 
$$\Rhom_{\DXS}(\shm, \sho_{X\times S})_{|Y\times S}\to \Rhom_{\DXS}(\shm, \sho_{X\times S\widehat{|}Y\times S})$$ is an isomorphism. This amounts to prove that the right-hand side term is a $\C$-constructible complex since in that case, in view of Proposition \ref{P:for}, we can apply $Li^*_s$ for each $s\in S$ to conclude the result by reduction to the absolute case which holds true (cf. \cite{KK3}). According to Lemma \ref{L12} (ii) the $\C$-constructibility of  $\Rhom_{\DXS}(\shm, \sho_{X\times S\widehat{|}Y\times S})$ follows from Corollary \ref{C iminv} and from \cite[Th. 1.1]{MFCS1}.
\end{proof}

In \cite[Prop. 3.12]{FMF} the authors introduce the torsion class 
\[
\perv(p_X^{-1}\sho_S)_t:= \{F\in\perv(p_X^{-1}\sho_S) |\; \codim p_X(\supp F)\geq 1\}\hfill \\
\]

whose associated torsion-free class is denoted by  $\perv(p_X^{-1}\sho_S)_{tf}$.
The category $\perv(p_X^{-1}\sho_S)_t$ 
 is a full thick abelian subcategory of the category $\perv(p_X^{-1}\sho_S)$ of perverse 
sheaves. 
We denote by $\rD^\rb_{\cc}(p_X^{-1}\sho_S)_{t}$ the thick subcategory of 
$\rD^\rb_{\cc}(p_X^{-1}\sho_S)$ whose objects have support in $X\times S_0$ where $S_0$ is a discrete subset of $S$ or equivalently whose perverse cohomologies belong to 
$\perv(p_X^{-1}\sho_S)_t$.

In analogy we denote by 
$\rD^\rb_{\rhol}(\DXS)_t$  the thick subcategory of $\rD^\rb_{\rhol}(\DXS)$ 
whose objects have support in $X\times S_0$ where $S_0$ is a discrete subset of $S$.

\begin{proposition}\label{Tequiv1}
The restriction of the solution functor $\pSol$ to $\rD^\rb_{\rhol}(\DXS)_t$ is an equivalence of categories
$$\pSol:\rD^\rb_{\rhol}(\DXS)_t\to \rD^\rb_{\cc}(p_X^{-1}\sho_S)_t$$
with quasi-inverse the restriction of the functor
 $\RH^S$ to $\rD^\rb_{\cc}(p_X^{-1}\sho_S)_t$. 
 \end{proposition}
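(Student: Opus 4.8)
The plan is to reduce the statement to the torsion subcategory and then invoke the "key" local model: an object of $\rD^\rb_{\rhol}(\DXS)_t$ is supported on $X\times S_0$ with $S_0$ discrete, so the question is local on $S$ around a point $s_0\in S_0$, where after a change of coordinate we have $(s-s_0)^n$ annihilating the cohomology modules. By an easy dévissage on $n$ (using that we work in triangulated categories, exactly as in the proof of Proposition~\ref{L:tor0}) I would reduce to the case $n=1$, i.e. to modules of the form $\shm_0\boxtimes(\sho_S/\sho_S(s-s_0))$ with $\shm_0$ regular holonomic on $X$. On the topological side the analogous statement holds: an object of $\rD^\rb_{\cc}(p_X^{-1}\sho_S)_t$ supported on $X\times\{s_0\}$ is of the form $G_0\boxtimes(\sho_S/\sho_S(s-s_0))$ with $G_0\in\rD^\rb_{\cc}(\C_X)$ (more precisely $G_0\otimes_\C p_X^{-1}(\sho_S/\sho_S(s-s_0))$), since the $p^{-1}\sho_S$-module structure factors through $\sho_{S,s_0}/(s-s_0)\simeq\C$.

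The second step is to compare the two functors on these product objects with the classical absolute Riemann–Hilbert correspondence of Kashiwara. I would show that $\RH^S(G_0\boxtimes(\sho_S/\sho_S(s-s_0)))\simeq \RH(G_0)\boxtimes(\shd_S/\shd_S(s-s_0))$, where $\RH$ is the absolute Riemann–Hilbert functor on $X$: this should follow from the construction of $\RH^S$ via $\sho^{t,S\sharp}_{\XS}$ together with the compatibility of $\THH^S$/$\RH^S$ with external products and with $Li^*_{s_0}$ (using that $Li^*_{s_0}\sho^{t,S\sharp}_{\XS}$ recovers the absolute $\ot_X$, analogous to Proposition~\ref{P:for}), plus \cite[Prop.~3.5]{MFCS2}. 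Dually $\pSol$ of $\shm_0\boxtimes(\shd_S/\shd_S(s-s_0))$ should be $\pSol_X(\shm_0)\boxtimes(\sho_S/\sho_S(s-s_0))$. Granting these two box-product formulas, the adjunction/inverse property of $(\pSol_X,\RH_X)$ in the absolute case (Kashiwara's equivalence $\rD^\rb_{\rhol}(\shd_X)\simeq\rD^\rb_{\cc}(\C_X)$) transports verbatim to give that $\pSol\circ\RH^S\simeq\id$ and $\RH^S\circ\pSol\simeq\id$ on the $n=1$ part, and one propagates along the dévissage in $n$ and then glues over the finitely many points of $S_0$ (on any relatively compact open of $S$) to obtain the equivalence in general.

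The main obstacle I expect is twofold. First, one must make sure $\pSol$ really lands in $\rD^\rb_{\cc}(p_X^{-1}\sho_S)_t$ (not just $\rD^\rb_{\cc}$): this is where one uses that the support condition is preserved, i.e. $\supp\pSol(\shm)\subseteq\supp(\shm)\subseteq X\times S_0$, which is clear, together with the characterization of $\rD^\rb_{\cc}(p_X^{-1}\sho_S)_t$ recalled before the statement. Second, and more delicate, is establishing the two external-product compatibilities above at the level of the subanalytic-site definitions of $\THH^S$, $\RH^S$ and $\sho^{t,S\sharp}_{\XS}$; in particular one needs that these constructions commute with $Li^*_{s_0}$ (equivalently with $-\overset{L}{\otimes}_{p^{-1}\sho_S}p^{-1}(\sho_S/m)$), which is the relative analogue of Proposition~\ref{P:for} and should be available from the cited properties in \cite{MFCS1,MFCS2}. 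Once these compatibilities are in place, the rest is a formal transfer of Kashiwara's absolute equivalence through the dévissage, with no genuinely new analytic input.
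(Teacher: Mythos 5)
Your strategy is genuinely different from the paper's, and as written it contains a gap at its central step. The box-product formulas you propose do not typecheck: $\RH^S$ takes values in complexes of $\DXS$-modules, so $\RH^S\bigl(G_0\otimes_{\C} p^{-1}(\sho_S/\sho_S(s-s_0))\bigr)$ cannot be $\RH(G_0)\boxtimes(\shd_S/\shd_S(s-s_0))$ --- the latter is a $\shd_{X\times S}$-module which is not even $\DXS$-coherent, since $\shd_S/\shd_S(s-s_0)$ is not finitely generated over $\sho_S$. The correct relative identification involves $\sho_S/\sho_S(s-s_0)$; the factor $\shd_S/\shd_S(s-s_0)$ only appears after the extension of scalars $\shd_{X\times S}\otimes_{\DXS}(-)$, and that observation (Proposition~\ref{L:tor0}) is precisely the engine of the paper's proof, not a bookkeeping detail. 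Moreover, ``matching'' $\RH^S$ and $\pSol$ object-by-object on the $n=1$ pieces against Kashiwara's absolute equivalence does not produce the unit $\shm\to\RH^S\pSol\shm$, nor does it let you propagate through the d\'evissage in $n$: for that you need a globally defined natural comparison morphism whose cone you can test on the pieces, and you never construct one. Finally, your fibrewise reduction relies on a compatibility of $\RH^S$ with direct images from $X\times\{s_0\}$; the $Li^*_s$-compatibility available in \cite{MFCS2} is the adjoint (inverse-image) statement and does not immediately yield it.

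The paper's route sidesteps all of this. It reduces the equivalence to showing that the canonical morphism
$$\Rhom_{\DXS}(\shm, \RH^S(G))\longrightarrow \Rhom_{\DXS}\bigl(\shm, \Rhom_{\pOS}(G,\sho_{\XS})[d_X]\bigr)$$
is an isomorphism for every torsion $\shm$ and every $G\in\rD^\rb_\rc(\pOS)$ --- arbitrary $G$, not only torsion $G$, which is what permits the d\'evissage on $G$ down to $G=\C_{\Omega\times S}\otimes p^{-1}\sho_S$ via \cite[Prop.~3.5]{MFCS2} and what later furnishes the morphism $\theta$ in Proposition~\ref{CNEW2}. For such $G$ one has $\RH^S(G)=\tho(\C_{\Omega\times S},\sho_{X\times S})[d_X]$, a complex of full $\shd_{X\times S}$-modules; by adjunction both sides become $\Rhom_{\shd_{X\times S}}(\shd_{X\times S}\otimes_{\DXS}\shm,-)$, and Proposition~\ref{L:tor0} together with Kashiwara's theorem on tempered solutions of regular holonomic modules (\cite[Cor.~8.6]{Ka3}), applied on the \emph{absolute} manifold $X\times S$ rather than fibrewise on $X$, concludes. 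If you wish to salvage your fibrewise approach you would need to correct the box-product formulas, prove the direct-image compatibility of $\RH^S$ along $X\times\{s_0\}\hto X\times S$, and run the d\'evissage on the canonical morphism above rather than on isolated identifications.
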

\begin{proof}
It will be sufficient to prove that the restriction of 
$\RH^S$ to $\rD^\rb_{\cc}(p_X^{-1}\sho_S)_{t }$ is fully faithful. 
Indeed $\Sol$ is essentially surjective since for any
 $F \in\rD^\rb_{\cc}(p_X^{-1}\sho_S)$ we have $F\simeq \pSol \RH^S(F) $
 and in the case of a torsion object $F$ in $\rD^\rb_{\cc}(p_X^{-1}\sho_S)_t$ we have
$\RH^S(F)\in \rD^\rb_{\rhol}(\DXS)_t$. 

For the full faithfulness it is enough to prove  that, for any $\shm\in \rD^\rb_{\rhol}(\DXS)_t$ and for any $G\in \rD^\rb_\rc(\pOS)$,  the morphism:

$$\Rhom_{\DXS}(\shm, \RH^S(G))\rightarrow
 \Rhom_{\DXS}(\shm, \Rhom_{\pOS}(G,\sho_{\XS})[d_X])$$ is an isomorphism. 
%

The cohomologies of $\shm$  are regular holonomic $\DXS$-modules satisfying the assumption of Corollary \ref{L:tor}. Hence $\shd_{X\times S}\otimes_{\DXS}\shm$ is a a complex with regular holonomic $\shd_{X\times S}$-modules as cohomologies.

Thanks to \cite[Prop.3.5]{MFCS2}, we may assume that 
$G=\C_{\Omega\times S}\otimes p_X^{-1}\sho_S$ 
 for some open subanalytic subset $\Omega$ of $X$, hence $\RH^S(G)=\tho(\C_{\Omega\times S}, \sho_{X\times S})[d_X]$ which is a complex with $\shd_{X\times S}$-modules as cohomologies and we get a chain of isomorphisms
 $$\Rhom_{\DXS}(\shm, \RH^S(G))\simeq \Rhom_{\shd_{X\times S}}(\shd_{X\times S}\otimes_{\DXS}\shm, \RH^S(G))$$ $$\underset{(\ast)}{\simeq}
 \Rhom_{\shd_{X\times S}}(\shd_{X\times S}\otimes_{\DXS}\shm, \Rhom(\C_{\Omega\times S},\sho_{\XS})[d_X])$$
 $$\simeq \Rhom_{\DXS}(\shm, \Rhom(\C_{\Omega\times S},\sho_{\XS})[d_X]) $$
 $$\simeq \Rhom_{\DXS}(\shm, \Rhom_{\pOS}(G,\sho_{\XS})[d_X])$$

 where isomorphism $(\ast)$ follows by \cite[Cor. 8.6]{Ka3}.

 \end{proof}
 
 \begin{corollary}\label{Ct}
 The category $\rD^\rb_{\rhol}(\DXS)_t$ is a triangulated subcategory of $\rD^\rb_{\srhol}(\DXS)$.
 \end{corollary}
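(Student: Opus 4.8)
The goal is to show that $\rD^\rb_{\rhol}(\DXS)_t$ is a triangulated subcategory of $\rD^\rb_{\srhol}(\DXS)$, so in essence we must prove that every object of $\rD^\rb_{\rhol}(\DXS)_t$ is strongly regular in the sense of Definition~\ref{D:sr}. Since $\rD^\rb_{\rhol}(\DXS)_t$ is already known to be a thick (in particular triangulated) subcategory of $\rD^\rb_{\rhol}(\DXS)$, and $\rD^\rb_{\srhol}(\DXS)$ is by construction a full triangulated subcategory of $\rD^\rb_{\hol}(\DXS)$, the only thing to check is the inclusion on objects: $\rD^\rb_{\rhol}(\DXS)_t\subseteq \rD^\rb_{\srhol}(\DXS)$. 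The plan is to reduce to the module case via the truncation functors, and then invoke the results already established for torsion modules.

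\textbf{Step 1: Reduction to cohomology modules.} Let $\shm\in\rD^\rb_{\rhol}(\DXS)_t$. Its cohomology modules $\shh^j(\shm)$ are regular holonomic $\DXS$-modules with support contained in $X\times S_0$ for a discrete subset $S_0\subset S$. By definition of $\rD^\rb_{\srhol}(\DXS)$ (whose objects are complexes with strongly regular cohomologies), it suffices to prove that each $\shh^j(\shm)$ is a strongly regular holonomic $\DXS$-module.

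\textbf{Step 2: Applying the torsion results.} Each such $\shh^j(\shm)$ is a regular holonomic $\DXS$-module with $\supp(\shh^j(\shm))\subseteq X\times T$ for some $T\subset S$ with $\dim T=0$ (take $T=S_0$). This is exactly the hypothesis of Proposition~\ref{L:tor0}, and a fortiori of Corollary~\ref{L:tor}. By Corollary~\ref{L:tor}, such a module, viewed as a complex concentrated in degree $0$, lies in $\rD^\rb_{\srhol}(\DXS)$; since it is concentrated in a single degree this means precisely that it is strongly regular as a module, i.e.\ it lies in $\Mod_{\srhol}(\DXS)$. Hence every cohomology module of $\shm$ is strongly regular, so $\shm\in\rD^\rb_{\srhol}(\DXS)$.

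\textbf{Step 3: Triangulated structure.} Finally, $\rD^\rb_{\rhol}(\DXS)_t$ is full and triangulated inside $\rD^\rb_{\hol}(\DXS)$ (indeed it is the thick subcategory introduced just before Proposition~\ref{Tequiv1}), and the inclusion $\rD^\rb_{\rhol}(\DXS)_t\subseteq\rD^\rb_{\srhol}(\DXS)$ obtained in Step~2 is the inclusion of a full triangulated subcategory, since cones, shifts and direct sums are all computed in the ambient category $\rD^\rb_{\hol}(\DXS)$. This gives the claim. The only substantive input is Corollary~\ref{L:tor}, which rests on the external-tensor-product argument of Proposition~\ref{L:tor0} together with the Kashiwara--Kawai comparison theorem in the absolute case; there is no further obstacle, so the proof is essentially a bookkeeping consequence of the results already proved.
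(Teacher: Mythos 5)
Your proof is correct, and it identifies the right input: the paper leaves this corollary unproved, and the substantive content is exactly Corollary~\ref{L:tor} (a complex in $\rD^\rb_{\rhol}(\DXS)$ supported in $X\times T$ with $\dim T=0$ is strongly regular), which you apply degreewise after the harmless reduction to cohomology modules via the definition of $\rD^\rb_{\srhol}(\DXS)$. The only nuance worth noting is that the paper places the statement as a corollary of Proposition~\ref{Tequiv1}, which suggests the alternative derivation: for $\shm\in\rD^\rb_{\rhol}(\DXS)_t$ one has $\shm\simeq\RH^S(\pSol\shm)$, and Theorem~\ref{Tstrong} says the image of $\RH^S$ lies in $\rD^\rb_{\srhol}(\DXS)$. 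Both routes rest on the same foundation (Proposition~\ref{L:tor0} plus the Kashiwara--Kawai theorem in the absolute case), but yours is the more direct one, since it bypasses the equivalence of categories entirely; indeed Proposition~\ref{Tequiv1} itself already invokes Corollary~\ref{L:tor} in its proof.
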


 Recall the following definition:
\begin{definition}\label{D:D-tyep}(\cite[Definition 2.10]{MFCS2}
A coherent $\DXS$-module $\shl$ is said to be of $D$-type with singularities along a normal crossing divisor $D\subset X$  if it satisfies the following conditions:
\begin{enumerate}
\item $\Char(\shl)\subset(\pi^{-1}(D)\times S)\cup (T^*_XX\times S)$,
\item $\shl$ is regular holonomic and strict,
\item $\shl\simeq \shl(\ast (D\times S))$.
\end{enumerate}
\end{definition}

Let us fix a normal crossing divisor $D$.

Recall that in \cite[Lemma 4.2]{MFCS2} the authors proved that
the category of holonomic $\DXS$-modules $\shl$ of D-type along $D$ is equivalent to 
the  category of locally free $p_U^{-1}\sho_S$ with $U:=X\setminus  D$ (\cite[Prop. 2.11]{MFCS2})
under the  correspondence
$$
\shl\mapsto \shh^0\DR(\shl)_{|U\times S} \qquad F\mapsto \RH^S(j_! \bD(F[d_X]))=\shl$$
where $j:U\hookrightarrow X$ is the inclusion of the open $U$ in $X$.  
In particular there is a natural functorial isomorphism $\shl \stackrel{\simeq}\to \RH^S\pSol(\shl)$
for any holonomic $\DXS$-modules $\shl$ of D-type along $D$.
 Therefore we get:
 
 \begin{corollary}\label{Cnew}
Let $\shl$ be a holonomic $\DXS$-module of D-type along $D$. Then $\shl$ is strongly regular.
\end{corollary}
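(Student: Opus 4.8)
The plan is to obtain this corollary as a direct consequence of Theorem~\ref{Tstrong} combined with the $D$-type equivalence recalled just above, which provides a natural functorial isomorphism $\shl\isom\RH^S\pSol(\shl)$ whenever $\shl$ is of $D$-type along $D$. In outline: transport the strong regularity of $\RH^S(F)$ (which holds for every $\C$-constructible $F$ by Theorem~\ref{Tstrong}) through this isomorphism, applied to $F=\pSol(\shl)$.

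The first step is to check that $\pSol(\shl)$ lies in $\rD^\rb_\cc(p^{-1}\sho_S)$, so that Theorem~\ref{Tstrong} may be applied to it. Since $\shl$ is of $D$-type it is in particular regular holonomic, and the relative constructibility theorem \cite[Th.~1.1]{MFCS1} then gives exactly $\pSol(\shl)\in\rD^\rb_\cc(p^{-1}\sho_S)$. Alternatively one may argue concretely: writing $U=X\setminus D$, $j\colon U\hookrightarrow X$ and $F=\shh^0\DR(\shl)_{|U\times S}$, which is locally free over $p_U^{-1}\sho_S$ by \cite[Prop.~2.11]{MFCS2}, the equivalence recalled above identifies $\pSol(\shl)$ with $j_!\bD(F[d_X])$; since $F$ is a local system on $U\times S$ and $j$ is the open complement of a divisor, $j_!$ preserves $\C$-constructibility, so $\pSol(\shl)$ is $\C$-constructible.

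The second step is to apply Theorem~\ref{Tstrong} to $F\defin\pSol(\shl)$, which yields $\RH^S(\pSol(\shl))\in\rD^\rb_{\srhol}(\DXS)$. Feeding in the functorial isomorphism $\shl\isom\RH^S\pSol(\shl)$ gives $\shl\in\rD^\rb_{\srhol}(\DXS)$; and since $\shl$ is concentrated in cohomological degree $0$, this is equivalent to $\shl\in\Mod_{\srhol}(\DXS)$, i.e. to the strong regularity of $\shl$. I do not anticipate a genuine obstacle here: the substantive content has already been carried out in Theorem~\ref{Tstrong} and in the construction of the $D$-type equivalence, and the only non-formal ingredient — the $\C$-constructibility of $\pSol(\shl)$ — is supplied by \cite{MFCS1}, or bypassed by the explicit description of $\pSol(\shl)$ above.
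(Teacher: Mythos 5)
Your proof is correct and follows essentially the same route as the paper: the corollary is stated there as an immediate consequence of the functorial isomorphism $\shl\isom\RH^S\pSol(\shl)$ for modules of $D$-type together with Theorem~\ref{Tstrong}. Your added check that $\pSol(\shl)$ lies in $\rD^\rb_\cc(p^{-1}\sho_S)$ (via \cite[Th.~1.1]{MFCS1} or the explicit identification with $j_!\bD(F[d_X])$) is a sensible piece of care that the paper leaves implicit.
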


 
 Let $\mathbf{D}$ be 
 the full thick abelian subcategory  of $\Mod_{\rhol}(\DXS)$ whose objects  $\shl$ 
satisfy the conditions of $D$-type except for strictness:
\begin{enumerate}
\item $\Char(\shl)\subset(\pi^{-1}(D)\times S)\cup (T^*_XX\times S)$,
\item $\shl$ is regular holonomic,
\item $\shl\simeq \shl(\ast (D\times S))$.
\end{enumerate}
$\mathbf{D}$ is also endowed with a natural torsion pair $(\mathbf{D}_t, \mathbf{D}_{tf})$
induced as in the previous case  by the torsion on $S$, $\mathbf{D}_{tf}$ denoting the category of modules of $D$-type.


We have now the tools to conclude the following:
\begin{proposition}\label{CNEW2} For any object $\shm$ of $\mathbf{D}$ there is a canonical  isomorphism $\theta(\shm):\shm\to \RH^S(\pSol\shm)$ which is functorial in $\shm$.
In particular, any object $\shm$ of $\mathbf{D}$  is strongly regular. 
\end{proposition}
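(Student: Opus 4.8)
The plan is to bootstrap from the torsion‐free case, which is already settled by \cite[Lemma 4.2]{MFCS2} and Corollary~\ref{Cnew}, to the whole category $\mathbf{D}$ by using the torsion pair $(\mathbf{D}_t,\mathbf{D}_{tf})$ together with Proposition~\ref{Tequiv1}. First I would recall that $\RH^S\circ\pSol$ comes with a canonical natural transformation $\theta$ from the identity, namely the adjunction‐type morphism $\shm\to\RH^S(\pSol\,\shm)$; since both $\RH^S$ and $\pSol$ are exact functors between triangulated categories, $\theta$ is a morphism of triangulated functors, so proving that $\theta(\shm)$ is an isomorphism for every $\shm\in\mathbf{D}$ can be done on the level of distinguished triangles. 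For a given $\shm\in\mathbf{D}$ consider the torsion triangle $t(\shm)\to\shm\to f(\shm)\xrightarrow{+}$ with $t(\shm)\in\mathbf{D}_t$ and $f(\shm)\in\mathbf{D}_{tf}$. Applying $\theta$ gives a morphism of distinguished triangles, so by the five lemma (in its triangulated‐category form) it suffices to check that $\theta(t(\shm))$ and $\theta(f(\shm))$ are isomorphisms.

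For the torsion‐free part $f(\shm)\in\mathbf{D}_{tf}$, this is exactly the content recalled before Corollary~\ref{Cnew}: the equivalence of \cite[Lemma 4.2]{MFCS2} between $D$-type modules along $D$ and locally free $p_U^{-1}\sho_S$-modules is realized precisely by the mutually quasi-inverse functors $\shl\mapsto\shh^0\DR(\shl)_{|U\times S}$ and $F\mapsto\RH^S(j_!\bD(F[d_X]))$, and one checks that the composite $\RH^S\circ\pSol$ restricted to $\mathbf{D}_{tf}$ agrees with the identity functor up to the canonical isomorphism $\shl\xrightarrow{\simeq}\RH^S\pSol(\shl)$ quoted there. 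For the torsion part $t(\shm)\in\mathbf{D}_t$, I would note that an object of $\mathbf{D}_t$ is a regular holonomic $\DXS$-module supported on $X\times S_0$ for a discrete $S_0\subset S$, hence it lies in $\rD^\rb_{\rhol}(\DXS)_t$; Proposition~\ref{Tequiv1} then tells us that $\RH^S$ and $\pSol$ are mutually quasi-inverse equivalences between $\rD^\rb_{\rhol}(\DXS)_t$ and $\rD^\rb_{\cc}(p_X^{-1}\sho_S)_t$, so $\theta(t(\shm))$ is an isomorphism. Combining the two cases via the five lemma yields that $\theta(\shm)$ is an isomorphism for all $\shm\in\mathbf{D}$; functoriality in $\shm$ is automatic since $\theta$ is a natural transformation. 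Finally, strong regularity of $\shm$ follows: since $\shm\simeq\RH^S(\pSol\,\shm)$ and $\pSol\,\shm\in\rD^\rb_{\cc}(p^{-1}\sho_S)$, Theorem~\ref{Tstrong} gives $\RH^S(\pSol\,\shm)\in\rD^\rb_{\srhol}(\DXS)$, so $\shm$ is strongly regular.

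The step I expect to be the main obstacle is verifying carefully that the abstract natural transformation $\theta$ used here really does restrict, on $\mathbf{D}_{tf}$, to the isomorphism $\shl\xrightarrow{\simeq}\RH^S\pSol(\shl)$ provided by \cite{MFCS2}, i.e.\ that there is only one such natural morphism and no sign/shift discrepancy between $j_!\bD((-)[d_X])$ and $\pSol$; and, relatedly, that the five‐lemma argument is legitimate, which requires knowing that the torsion triangle is functorial in $\shm\in\mathbf{D}$ (it is, because $(\mathbf{D}_t,\mathbf{D}_{tf})$ is a torsion pair in the abelian category $\mathbf{D}$) and that $\RH^S,\pSol$ send it to a distinguished triangle (they do, being triangulated functors applied to the image of the short exact sequence $0\to t(\shm)\to\shm\to f(\shm)\to0$ under the natural embedding $\Mod_{\rhol}(\DXS)\hookrightarrow\rD^\rb_{\rhol}(\DXS)$). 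Once these compatibilities are pinned down, the argument is a formal dévissage along the torsion pair.
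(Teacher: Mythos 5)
Your dévissage along the torsion pair $(\mathbf{D}_t,\mathbf{D}_{tf})$ is indeed the skeleton of the paper's argument, and your treatment of the torsion part via Proposition~\ref{Tequiv1} and the final deduction of strong regularity from Theorem~\ref{Tstrong} agree with the paper. But there is a genuine gap exactly at the point you flag as ``the main obstacle'': the canonical natural transformation $\theta\colon\Id\to\RH^S\circ\pSol$ that your five-lemma argument presupposes does not exist a priori. The only natural comparison map available goes the other way, namely $\RH^S(G)\to\Rhom_{\pOS}(G,\sho_{\XS})[d_X]$ (tempered solutions map to all solutions), and $\theta(\shm)$ is \emph{defined} as the lift of the biduality morphism $\shm\to\Rhom_{\pOS}(\pSol\shm,\sho_{\XS})[d_X]$ through this map. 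Such a lift exists canonically and uniquely only once one knows that
$$(\ast)\quad\Rhom_{\DXS}(\shm,\RH^S(G))\longrightarrow\Rhom_{\DXS}(\shm,\Rhom_{\pOS}(G,\sho_{\XS})[d_X])$$
is an isomorphism. Proving $(\ast)$ is therefore not a verification that can be postponed: it \emph{is} the construction of $\theta$. Accordingly the paper runs the torsion/torsion-free dévissage on the statement $(\ast)$ itself --- both sides are cohomological in $\shm$, so the class of $\shm$ satisfying $(\ast)$ for all $G$ is closed under extensions --- and only afterwards defines $\theta$. Without a globally defined $\theta$, your ``morphism of distinguished triangles'' is unavailable; at best the triangulated axioms produce a non-canonical filler, and even that requires the isomorphisms on $t(\shm)$ and $f(\shm)$, obtained from two unrelated sources, to be compatible with the connecting morphism $f(\shm)\to t(\shm)[1]$ --- the same compatibility problem in disguise, and one that would in any case forfeit the canonicity and functoriality asserted in the statement.

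A second, related shortfall concerns the torsion-free case. For the dévissage on $(\ast)$ one needs the isomorphism for \emph{arbitrary} $G\in\rD^\rb_\rc(\pOS)$, not merely the single isomorphism $\shl\simeq\RH^S\pSol(\shl)$ quoted from \cite[Lemma 4.2]{MFCS2}. The paper obtains this by reducing, via the local presentation of a $D$-type module as a quotient $\DXS/\sum_{i=1}^d\DXS(x_i\partial_{x_i}-\alpha_i(s))+\sum_{i>d}\DXS\partial_{x_i}$ and the reduction to $G=\C_{\Omega\times S}\otimes p^{-1}\sho_S$, to the solvability of a Fuchsian system in tempered distributions, i.e.\ to Tahara's theorem \cite{Ta}. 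This analytic input is the real content of the torsion-free case and is not supplied by the categorical equivalence you invoke.
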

\begin{proof}

We shall prove  that, for any $\shm\in \mathbf{D}$ and for any $G\in \rD^\rb_\rc(\pOS)$,  the morphism:

$$(\ast)\,\,\Rhom_{\DXS}(\shm, \RH^S(G))\rightarrow
 \Rhom_{\DXS}(\shm, \Rhom_{\pOS}(G,\sho_{\XS})[d_X])$$ is an isomorphism. 
 
Then $\theta(\shm)$ will be the unique morphism in $\Hom_{\DXS}(\shm, \RH^S(\pSol\shm))$ corresponding to $Id\in \Hom_{p^{-1}\sho_S}(\pSol\shm, \pSol\shm)$. The fact that $\theta(\shm)$ is a functorial isomorphism is proved precisely as in \cite{MFCS3} so we will avoid the repetition of the proof.

According to the thickness of $\Mod_{\srhol}(\DXS)$ we may reduce the proof to the torsion case and to the torsion free case. In the torsion case, the result is contained in Proposition \ref{Tequiv1}.

If $\shm$ is torsion free, $\shm$ is of $D$-type along $D\times S$ and, according to the proof of \cite[Cor. 2.8]{MFCS2}, a devissage allows  us to consider local coordinates $(x,s)$ on $X\times S$ vanishing on $p=(x_0, s_0)\in D\times S$ such that $D=\{x\in X, x_1\cdots x_d=0\}$, and $\shm$ isomorphic to a quotient $\DXS/\sum_{i=1}^d\DXS(x_i\partial_{x_i}-\alpha_i(s))+\sum_{i=d+1}^n\DXS \partial _{x_i}$ for some holomorphic functions $\alpha_i(s)$, 
$i=1,\cdots,d$, on a fixed open neighborhood of $0$ in $S$.
We shall again reduce to the case $G=\C_{\Omega\times S}$ for some open subanalytic subset $\Omega$ of $X$. 
We have $$\Rhom_{\DXS}(\shm, \RH^S(G))\simeq \Rhom_{\DXS}(\shm, \Rhom_{\shd_{\overline{X}\times \overline{S}}}(\sho_{\overline{X}\times\overline{S}},\TH^S(G)[d_X])$$ $$\simeq\Rhom_{\shd_{X\times\overline{X}\times \overline{S}\times S/S}}(\shm\boxtimes \sho_{\overline{X}\times\overline{S}},\TH^S(G)[d_X])$$ $$\simeq \Rhom_{\shd_{X\times\overline{X}\times \overline{S}\times S/S}}(\shm\boxtimes \sho_{\overline{X}\times\overline{S}},\tho(\C_{\Omega\times S}, \Db_{X\times S}[d_X]))$$ $$\simeq \Rhom_{\shd_{X\times\overline{X}\times \overline{S}\times S}}((\shd_{X\times S}\otimes_{\DXS}\shm)\boxtimes \sho_{\overline{X}\times\overline{S}},\TH^S(G)[d_X]))$$ 

Remark that $$(\shd_{X\times S}\otimes_{\DXS}\shm)\boxtimes \sho_{\overline{X}\times\overline{S}}$$ is a fuchsian system along each hypersurface $D_j\times S:=\{(x,s), x_j=0\}$, $j=1,\cdots, d$ in the sense of \cite{Ta}. It is clear (cf. example 5.1 in \cite {TL}) that the solutions of the homogeneous system defining $\shm$ belong to $\tho(\C_{\Omega\times S}, \Db_{X\times S})$. The result is then an application of \cite[Th.1]{Ta} which entails the solvability  in $\Db_{0,0)}$ of the same system.

\end{proof}












 \subsection{The case $d_X=1$}
 The following result improves \cite{MFCS3} in the sense that we precise which categories are equivalent by means of $\RH^S$. However it remains conjectural that the condition of strong regularity is indeed not equivalent to that of regularity.
 
\begin{theorem}\label{Tequiv} 
The contravariant functor
$$\RH^S:\rD^\rb_{\cc}(p^{-1}\sho_S)\to \rD^\rb_{\srhol}(\DXS)$$ is an equivalence of categories and $\pSol$ is its quasi-inverse.
\end{theorem}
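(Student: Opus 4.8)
The plan is to establish the equivalence by proving that $\RH^S$ is fully faithful and essentially surjective onto $\rD^\rb_{\srhol}(\DXS)$, and that $\pSol$ is a two-sided quasi-inverse. The starting point is the already-known fact (from \cite{MFCS2}) that for every $F\in\rD^\rb_{\cc}(p^{-1}\sho_S)$ there is a functorial isomorphism $F\simeq\pSol\,\RH^S(F)$; combined with Theorem~\ref{Tstrong}, which tells us that $\RH^S$ lands in $\rD^\rb_{\srhol}(\DXS)$, this gives one of the two composition identities and also reduces essential surjectivity to proving that for $\shm\in\rD^\rb_{\srhol}(\DXS)$ the natural morphism $\theta(\shm)\colon\shm\to\RH^S(\pSol\,\shm)$ is an isomorphism. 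So the whole theorem collapses to the single statement: for $d_X=1=d_S$, the canonical morphism $\shm\to\RH^S(\pSol\,\shm)$ is an isomorphism for every strongly regular holonomic complex $\shm$ (this is also exactly the adjunction-type statement that yields full faithfulness of $\RH^S$, since $\Hom_{\DXS}(\shm,\shn)\to\Hom_{p^{-1}\sho_S}(\pSol\,\shn,\pSol\,\shm)$ is identified with $\Hom(\shm,\shn)\to\Hom(\shm,\RH^S\pSol\,\shn)$).

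First I would reduce to the case where $\shm$ is a single strongly regular holonomic $\DXS$-module: since $\rD^\rb_{\srhol}(\DXS)$ is compatible with truncation, one proceeds by induction on cohomological length using the triangle $\shh^0\shm\to\shm\to\tau^{\geq1}\shm\xrightarrow{+}$ and the five lemma, exactly as in the reductions already used in Proposition~\ref{Prop:1} and Remark. Next, given a strongly regular holonomic module $\shm$, I would use the torsion pair: $0\to t(\shm)\to\shm\to f(\shm)\to0$ with $t(\shm)$ torsion and $f(\shm)$ strict. Both are strongly regular (the torsion one by Corollary~\ref{L:tor}, the strict quotient directly from the definition of strong regularity together with the long exact sequence, as in the Remark following Proposition~\ref{Prop:1}). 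For $t(\shm)$ the statement $\theta$ is an isomorphism is already contained in Proposition~\ref{Tequiv1}. So the crux is the strict (torsion-free) case: a strict strongly regular holonomic $\DXS$-module $\shm$ with $d_X=1$.

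For the strict case with $d_X=1$, I would argue as follows. Since $d_X=1$, the singular locus of $\shm$ in $X$ is, generically on $S$, a discrete set of points $x_1,\dots,x_r\in X$; away from these points $\shm$ is (generically on $S$) an integrable connection, hence locally $\shm\simeq\shm(\ast(Y\times S))$ for $Y=\{x_1,\dots,x_r\}$ corresponds to a locally free $p_U^{-1}\sho_S$-module on $U=X\setminus Y$, and strong regularity along each $\{x_i\}$ together with Theorem~\ref{T1} forces $Li^*_s\shm$ to be regular holonomic for every $s$. The strategy is then to run the devissage of \cite[Cor.~2.8]{MFCS2}: localize along $Y\times S$ to land in the category $\mathbf D$ of modules of $D$-type (where $D=Y$), apply Proposition~\ref{CNEW2} to get $\theta$ an isomorphism there, then glue back the contribution supported on $Y\times S$ (which is torsion, hence handled by Proposition~\ref{Tequiv1}) via the triangle $R\Gamma_{[Y\times S]}(\shm)\to\shm\to\shm(\ast(Y\times S))\xrightarrow{+}$, using that $\RH^S$ and $\pSol$ commute with $R\Gamma_{[Y\times S]}$ and with localization (Proposition~\ref{RHSloc}) and with $\theta$ by naturality, and concluding by the five lemma. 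The main obstacle is precisely making the reduction to $D$-type work when $\shm$ has nontrivial torsion interacting with the singular points: one must check that after localizing along $Y\times S$ the strict part becomes of $D$-type along a normal crossing divisor — here $d_X=1$ makes $Y$ automatically a (reduced) normal crossing divisor, which is why the argument does not extend to $d_X>1$ — and that the gluing triangle is compatible with the canonical morphism $\theta$; the delicate point there is that regularity/strong regularity must be propagated through the non-strict part of $\shm$ restricted to the generic locus of $S$, for which Theorem~\ref{T1} and the genericity statements of \cite{MFCS3} are the essential inputs.
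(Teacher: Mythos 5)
Your overall skeleton (reduce to the adjunction-type isomorphism, then devissage: cohomological truncation, torsion pair, localization along the singular set, $D$-type) is essentially the strategy of \cite{MFCS3}, and it is not what the paper does; more importantly, as written it has two genuine gaps. The first is your treatment of $R\Gamma_{[Y\times S]}(\shm)$ for $Y=\{x_1,\dots,x_r\}\subset X$ discrete: you call this contribution ``torsion, hence handled by Proposition~\ref{Tequiv1}.'' But torsion in that proposition means support contained in $X\times S_0$ with $S_0$ a \emph{discrete subset of $S$} (i.e.\ $p^{-1}\sho_S$-torsion), whereas $R\Gamma_{[Y\times S]}(\shm)$ is supported on $\{x_i\}\times S$, which spreads over all of $S$ and in general contains strict (non-torsion) pieces — for instance the image under Kashiwara's equivalence of a locally free $\sho_S$-module. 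Proposition~\ref{Tequiv1} simply does not apply to it, so the gluing step of your devissage is unjustified and would require a separate argument for complexes supported on $\{x\}\times S$.

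The second gap is that you never remove the exceptional discrete set $S_0\subset S$: every step of your argument (discreteness of the singular locus ``generically on $S$,'' holonomicity of the localization triangle, reduction to $D$-type) is only guaranteed away from $S_0$, and appealing vaguely to ``Theorem~\ref{T1} and the genericity statements of \cite{MFCS3}'' does not close this, since those statements are themselves only generic. This is precisely the point of the theorem: the paper's proof does no devissage at all, but instead quotes \cite[Prop.~2.5]{MFCS3}, which already gives the isomorphism \eqref{aaa} outside a discrete set $S_0$ \emph{and} characterizes $S^\ast=S\setminus S_0$ as the largest open set over which $\Di_{Y*}\Di_Y^*(\shm)$ is holonomic for every reduced divisor $Y\subset X$; Corollary~\ref{Lem:srii} identifies strong regularity with exactly the condition that $R\Gamma_{[\{x\}\times S]}(\shm)$ is holonomic for every $x$ over all of $S$, whence $S_0=\emptyset$. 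That corollary is the one tool that converts ``generic on $S$'' into ``everywhere on $S$,'' and it is absent from your argument; if you want to keep your devissage, you must invoke it both to control the localization triangle at every $s\in S$ and to handle the $\{x\}\times S$-supported part.
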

\begin{proof}
The statement will follow as in the proof of Proposition \ref{CNEW2} provided that for any $\shm\in\rD^\rb_{\srhol}(\DXS)$ and any $F\in\rD^\rb_\rc(\pOS)$ the natural morphism

 \begin{equation}\label{aaa}
\Rhom_{\DXS}(\shm, \RH^S(F))\rightarrow
 \Rhom_{\DXS}(\shm, \Rhom_{\pOS}(F,\sho_{\XS})[1])
 \end{equation}
  is an isomorphism. In \cite{MFCS3} the authors proved that 
for any $\shm\in\rD^\rb_{\rhol}(\DXS)$ and any $F\in\rD^\rb_\rc(\pOS)$, there exists a discrete $S_0\subset S$ depending on $\shm$ only such that 
\eqref{aaa}
is an isomorphism outside  $S_0$.
In  their proof (\cite[Prop. 2.5]{MFCS3}) the authors show that 
in the case $\dim\supp (\shm)\leq 1$ one has $S_0=\emptyset$ 
while for $\dim\supp (\shm)=2$ the set $S^\ast:=S\setminus S_0$
can be taken to be the biggest open subset of $S$ such that
 for any reduced divisor $Y$ of~$X$, setting $i_Y:Y\hto X$ the inclusion,
$\Di_{Y*}\,\Di_Y^*(\shm_*)$ has holonomic cohomologies as an object of the category 
$\rD^\rb(\DXSa)$. Hence, if $\shm\in\rD^\rb_{\srhol}(\DXS)$,
according to Corollary~\ref{Lem:srii} we obtain that $S_0=\emptyset$.
 \end{proof}

\bibliographystyle{amsalpha}
\bibliography{bibart}

\end{document}